\def\ep{\epsilon}
\def\r{{\mathbf r}}
\def\f{\frac}
\def\om{\omega}
\def\Om{\Omega}
\def\na{\nabla}
\def\q{\quad}
\def\etal{{\it et al}\;}
\def\eref#1{(\ref{#1})}
\def\p{\partial}
\def\H{{\mathbf H}}
\def\a{{\mathbf a}}
\def\e{{\mathbf e}}
\def\n{{\mathbf n}}
\title{Magnetic resonance-based reconstruction method of conductivity and permittivity distributions at the Larmor frequency\thanks{\footnotesize This work was supported  by the ERC Advanced Grant Project MULTIMOD--267184  and the National Research Foundation of Korea (NRF) grant funded by the Korean government (MEST) (No. 2011-0028868, 2012R1A2A1A03670512).}}
\author{Habib Ammari\thanks{\footnotesize Department of Mathematics and Applications,
Ecole Normale Sup\'erieure, 45 Rue d'Ulm, 75005 Paris, France
(habib.ammari@ens.fr).} \and Hyeuknam Kwon\thanks{\footnotesize Department of Computational
Science and Engineering, Yonsei University, 50 Yonsei-Ro,
Seodaemun-Gu, Seoul 120-749, Korea (3c273-85@hanmail.net, hansubman@naver.com, seoj@yonsei.ac.kr).} \and Yoonseop Lee\footnotemark[3]
\and Kyungkeun Kang\footnotemark[4]\thanks{\footnotesize Department of mathematics
Yonsei University 50 Yonsei-Ro, Seodaemun-Gu, Seoul 120-749, Korea (kkang@yonsei.ac.kr).} \and Jin Keun Seo\footnotemark[3]}
\begin{document}

\maketitle

\begin{abstract}
Magnetic resonance electrical property tomography  is a recent medical imaging modality for visualizing the electrical tissue properties  of the human body using  radio-frequency magnetic fields. It uses the fact that in magnetic resonance imaging systems the eddy currents induced by the radio-frequency magnetic fields reflect the conductivity ($\sigma$)  and permittivity ($\ep$) distributions inside the tissues through Maxwell's equations. The corresponding inverse problem consists of reconstructing the admittivity distribution ($\gamma=\sigma+i\om\ep$) at the Larmor frequency ($\om/2\pi=$128 MHz for a 3 tesla MRI machine) from the positive circularly polarized component of the magnetic field $\H=(H_x,H_y,H_z)$. Previous methods are usually based on an assumption of local homogeneity ($\na\gamma\approx 0$) which simplifies the governing equation. However, previous methods that include  the assumption of homogeneity  are prone to artifacts in the region where $\gamma$ varies.  Hence, recent work has sought  a reconstruction method that does not assume local-homogeneity. This paper presents a new magnetic resonance electrical property tomography reconstruction method which does not require any local homogeneity assumption on $\gamma$. We find that $\gamma$ is a solution of a semi-elliptic partial differential equation with its coefficients depending only on the measured data $H^+$, which enable us to compute a blurred version of $\gamma$. To improve the resolution of the reconstructed image, we developed a new optimization algorithm that minimizes the mismatch between the data and the model data as a highly nonlinear function of $\gamma$. Numerical simulations are presented to illustrate the potential of the proposed reconstruction method.
\end{abstract}

\begin{keywords}  inverse problems, electrical property tomography, optimal control, conductivity, permittivity, MRI, Maxwell's equations \end{keywords}

\begin{AMS}  35R30, 35J61, 35Q61 \end{AMS}

\markboth{H. Ammari et al.}{MR-based reconstruction method of conductivity and permittivity distributions}

\section{Introduction}
Magnetic resonance imaging (MRI) system can visualize both the conductivity, $\sigma$, and permittivity, $\epsilon$, of biological tissues at the  Larmor frequency, which is approximately 128 MHz for a 3 tesla MRI machine. Magnetic resonance electrical property tomography (MREPT) uses a time-harmonic magnetic field inside an imaging object.  The standard radio-frequency coil of the magnetic resonance scanner produces the field by feeding in a sinusoidal current at the Larmor frequency. The time-harmonic magnetic field, denoted by $\H=(H_x,H_y,H_z)$, reflects both the conductivity $\sigma$ and permittivity $\epsilon$ of human tissues through the following arrangement of time-harmonic Maxwell's equations:
\begin{equation}\label{eq:governH_0}
  -\Delta \H = \na\log (\sigma+i\om\ep)\times[\na\times\H] - i\omega\mu_0(\sigma+i\om\ep)\H\q\q \mbox{in }~ \Om,
\end{equation}
where $\mu_0=4\pi\times 10^{-7}$  H$/$m is the magnetic permeability of free space,  $\om/2\pi$ is the Larmor frequency of the MRI scanner, and $\Om$ denotes a three dimensional domain occupying an imaging object. Here, we use the fact that the magnetic permeability of the human body is approximately equal to $\mu_0$.

Clinical MRI scanners  measure  the positive rotating magnetic field, $H^+:=(H_x+ i H_y)/2$, which is the component of the magnetic field $\H$ in the direction $(1,i,0)/2$. This is because the MR signal, denoted by $S$, contains  partial information about the time-harmonic magnetic field $\H=(H_x,H_y,H_z)$ in the following way
\begin{equation}\label{MRsignal}
  S_\tau(\r)~~ \propto ~~M(\r) H^-(\r) H^+(\r) \f{\sin (\alpha\tau|H^+(\r)|)}{|H^+(\r)|}\q\mbox{ for }~\r=(x,y,z)\in \Om,
\end{equation}
where   $H^-=(H_x- i H_y)/2$  is the negative rotating magnetic field,  $M(\r)$ is the standard MR magnitude image at position $\r$, and $\alpha$ is a constant. Here, $\tau$ is the duration of the radio-frequency pulse that controls the intensity of the signal $S_\tau$. Acquiring two MR signals $S_{\tau_1}$ and $S_{\tau_2}$ with suitably chosen $\tau_1$ and $\tau_2$, we  can extract the  $H^+$  data through \eref{MRsignal} with the assumption that ${H^+}/{|H^+|}\approx{H^-}/{|H^-|}$.  This data acquisition technique is called B1 mapping, and  was first suggested by Haacke \etal. \cite{Haacke1991} in the early nineties. For details on the B1 mapping technique measuring $H^+$, we refer to numerous published works in the literature \cite{Akoka1993,Collins2002,Moortele2005,Stollberger1996,Wang2005}.

The inverse problem of MREPT consists of reconstructing distributions of $\sigma$ and $\ep$ from $H^+$. To solve the inverse problem, we need to represent the distributions of $\sigma$ and $\ep$ with respect to the data $H^+$. Under the assumption of the local homogeneity, $\na (\sigma+i\om\ep)={\bf 0}$,
the governing partial differential equation \eref{eq:governH_0} directly gives the following simple relation between $\sigma+i\om\ep$ and $H^+$;
\begin{equation}\label{eq:governH_1}
  -\Delta H^+= - i\omega\mu_0(\sigma+i\om\ep) H^+\q\q \mbox{in }~ \Om.
\end{equation}
The most widely used MREPT reconstruction methods \cite{Wen2003,Katscher2007,Katscher2006a,Katscher2006b,Katscher2009}  are based on \eref{eq:governH_1} as it gives the direct representation formula for $\sigma+i\om\ep$ with respect to $H^+$,
\begin{equation}\label{eq:direct-formula}
  \sigma+i\om\ep = \f{1}{i\om\mu_0} \f{\Delta H^+}{H^+}\q\q \mbox{in }~ \Om.
\end{equation}
However, when $\na (\sigma+i\om\ep)$ is not small, the direct formula \eref{eq:direct-formula} produces serious reconstruction errors \cite{Seo2012a}. The local homogeneity assumption neglects the contribution of
$ \na\ln\gamma\times(\na\times\H)$  in \eref{eq:governH_0}. Such reconstruction errors are rigorously analyzed in \cite{Seo2012a}.

We need to remove the local homogeneity assumption to develop a reconstruction method. Recently, a reconstruction method \cite{Song2013} removing the assumption of $(\frac{\partial}{\partial x},\frac{\partial}{\partial y})(\sigma+i\om\ep)=\mathbf{0}$ has been developed, although it  still requires the assumption of  $\f{\p}{\p z}(\sigma+i\om\ep)=0$. The method is based on the finding that, under the assumption of longitudinal homogeneity, $\sigma+i\om\ep$ is a solution of a semilinear elliptic PDE with coefficients that only depend on $H^+$ \cite{Song2013}.

In this paper, with no assumption of local homogeneity for $\sigma+i\om\ep$, we develop a new reconstruction method. We find that $\sigma$ and $\ep$ satisfy the elliptic partial differential equation,
\begin{equation}\label{ellipticPDE}
  \na\cdot\left(G_2[H^+]\na\left(\begin{array}{c}
    \sigma \\
    \ep
  \end{array}\right)\right) + G_1[\sigma,\epsilon,H^+]\cdot\na\left(\begin{array}{c}
    \sigma \\
    \ep
  \end{array}\right)
 +G_0[\sigma,\ep,H^+]
 =0 \q \mbox{in }~ \Om,
\end{equation}
where $G_2[H^+]$ is a positive semi-definite matrix, and $G_1[\sigma,\epsilon,H^+]$ and $G_0[\sigma,\ep,H^+]$ are vector fields depending only on $\sigma$, $\ep$, and $H^+$.
Hence, the distribution of $\sigma$ and $\ep$ can be obtained by solving  equation (\ref{ellipticPDE}). Unfortunately, $G_2[H^+]$ in \eref{ellipticPDE} is degenerate, and thus requires the addition of the weighted diffusion, $\rho$, to the $(3,3)$ entry of $G_2[H^+]$ so that $G_2[H^+]+\rho\e_3^T\e_3$ is positive definite, where $\e_3=(0,0,1)$. Thus,  \eref{ellipticPDE} yields  blurred images of $\sigma$ and $\ep$ in the $z$-direction.

To improve the spatial resolution of the reconstructed image, we develop an  optimal control method for the parameters $\sigma$ and $\ep$. In the proposed  adjoint-based optimization method, the conductivity and permittivity  distributions are updated iteratively by a nonlinear optimization algorithm which minimizes the discrepancy function describing the $L^2$-mismatch between the forward model and the observed data.
We compute the Fr\'echet derivatives of the discrepancy function with respect to $\sigma$ and $\ep$. This optimal control method requires a very good initial guess. Fortunately, we can obtain a good initial guess using \eref{ellipticPDE}. Several numerical simulations are carried out to show the validity of the  proposed reconstruction method.

\section{Governing equation for the admittivity reconstruction}\label{govern}
We assume that an imaging object occupying a three-dimensional domain $\Om$ with its boundary $\p\Om$ being of class  $\mathcal C^2$. Let $\gamma=\sigma+i\om\ep$ denote the  admittivity of the subject at the MR Larmor frequency. For simplicity, we assume that
$\gamma$ is a constant near the boundary; that is, $\gamma=\gamma_0$ in the region $\Omega_d:=\{x\in\Omega ~|~ \mathrm{dist}(x,\partial\Omega)<d\}$ for some $d>0$, where $\gamma_0=\sigma_0 + i \omega \ep_0$ with $\sigma_0$ and $\ep_0$ being known reference quantities.

Let $H^s(\Omega)$ denote the standard Sobolev space of order $s$. We assume that the admittivity distribution $\gamma=\sigma+i\om\epsilon$  belongs to the following admissible set $\mathcal A$:
\begin{equation} \label{defA}
  \mathcal A = \left\{\gamma\in H^2(\Omega)\cap L^\infty_{\underline{\lambda}, \overline{\lambda}}(\Om) ~|~ \omega \mu_0  \|\gamma\|_{H^2}+ 8 |\Omega|^{1/6} \|\f{\nabla\gamma}{\gamma}\|_{H^2}<c_1,~ \left.\gamma\right|_{\Omega_d}=\gamma_0\right\},
  \end{equation}
where $\underline{\lambda}$, $\overline{\lambda}$ and $c_1$ are positive constants, $|\Omega|$ denotes the volume of $\Omega$,  and $$L^\infty_{\underline{\lambda}, \overline{\lambda}}(\Om):= \bigg\{ \gamma\in L^\infty(\Om):~
\underline{\lambda}<\Re\{\gamma\}, \Im\{\gamma\}<\overline{\lambda} \bigg\}.$$

The inverse problem is to invert the map $\gamma \to H^+$ where $H^+$ represents the measured data extracted from the MR signal in \eref{MRsignal} and the relation between $\H$ and $\gamma$ is given in  \eref{eq:governH_0}.
Noting that the component $H_z$ is known to be relatively small with a regular birdcage coil of  MRI scanner  \cite{Vaughan2004},  we  assume $H_z=0$.

To solve the inverse problem, we need to express $\sigma+i\om\ep$  in terms of $H^+$ only using the governing equation \eref{eq:governH_0}.
Taking the inner product of both sides of equation \eref{eq:governH_0} with the vector $\a=(1,i,0)/2$, we have
\begin{equation}\label{BB1}
  -\Delta H^+ = (\nabla\log\gamma\times(\nabla\times\H))\cdot\a-i\omega\mu_0\gamma H^+ \q\mbox{in}~\Om.
\end{equation}
It follows from the result of \cite{Song2013} that the contribution of $H^-$ in \eref{BB1} can be eliminated from the identity
\begin{eqnarray*}
 &&(\nabla\log\gamma\times(\nabla\times\H))\cdot\a\\
  &&\q\q\q=
  -\nabla\log\gamma\cdot\left(\frac{\partial H^+}{\partial x}-i\frac{\partial H^+}{\partial y}, i\frac{\partial H^+}{\partial x}+\frac{\partial H^+}{\partial y}, \frac{\partial H^+}{\partial z}\right).
\end{eqnarray*}
Equation \eref{BB1} with the above identity gives the following lemma \cite{Song2013}.
\begin{lemma}\label{lemma:H+}
The $\gamma$ in \eref{eq:governH_0} satisfies the following first-order partial differential equation
\begin{equation}\label{eq:H+}
 \mathcal L H^+
  \cdot
  \f{\na \gamma}{ \gamma} - i\omega\mu_0 \gamma ~H^+~~= ~-\Delta H^+\q\mbox{in}~\Om,
\end{equation}
where $\mathcal L$ is the linear differential operator given by
\begin{equation}\label{eq:operatorL}
  \mathcal L=\left(-\f{\p }{\p x} + i\f{\p }{\p y},~~-i\f{\p }{\p x} - \f{\p }{\p y},~~ -\f{\p}{\p z}\right).
\end{equation}
\end{lemma}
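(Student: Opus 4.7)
The plan is to derive the identity by projecting the vector equation \eref{eq:governH_0} onto the complex direction $\a=(1,i,0)/2$ and then eliminating the unmeasured negative polarized component $H^-$ using the divergence-free condition on $\H$. The main steps are as follows.

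First, I would take the inner product of \eref{eq:governH_0} with $\a$, which immediately produces \eref{BB1}, since $\H\cdot\a=H^+$ and $\Delta\H\cdot\a=\Delta H^+$. The remaining task is to rewrite the curl-cross term as a scalar product between $\na\log\gamma$ and a differential operator acting on $H^+$ alone. Writing $\na\log\gamma=(g_1,g_2,g_3)$ and expanding the triple product $(\na\log\gamma\times(\na\times\H))\cdot\a$ componentwise, using the assumption $H_z=0$, I would obtain an expression linear in the derivatives of $H_x$ and $H_y$, namely
\begin{equation*}
(\na\log\gamma\times(\na\times\H))\cdot\a
=\tfrac12(g_2-ig_1)(\p_x H_y-\p_y H_x)-\tfrac{g_3}{2}(\p_z H_x+i\p_z H_y).
\end{equation*}

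Next I would convert the $H_x,H_y$ derivatives into derivatives of $H^\pm$ via $H_x=H^++H^-$ and $H_y=-i(H^+-H^-)$. The $\p_z$-term immediately simplifies to $2\p_z H^+$, which is clean. The in-plane term $\p_x H_y-\p_y H_x$ a priori mixes $H^+$ and $H^-$, and this is where the argument turns on the one non-trivial ingredient: the Maxwell divergence relation $\na\cdot\H=0$. Combined with $H_z=0$ it yields $\p_x H_x+\p_y H_y=0$, which in $H^\pm$ variables becomes $(\p_x H^+-i\p_y H^+)+(\p_x H^-+i\p_y H^-)=0$; multiplying by $i$ gives the dual identity $i\p_x H^--\p_y H^-=-(i\p_x H^++\p_y H^+)$, and substituting this back cancels $H^-$ and leaves $\p_x H_y-\p_y H_x=-2(i\p_x H^++\p_y H^+)$. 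Collecting coefficients of $g_1,g_2,g_3$ reproduces exactly the identity stated in the excerpt, i.e.\ $\na\log\gamma\cdot\mathcal L H^+$ with $\mathcal L$ as in \eref{eq:operatorL}. Substituting into \eref{BB1} and using $\na\log\gamma=\na\gamma/\gamma$ produces \eref{eq:H+}.

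The main obstacle is the elimination of $H^-$; the rest is bookkeeping on the cross product. That elimination is genuinely non-trivial in that it uses Maxwell's divergence law together with the modeling assumption $H_z\approx 0$, without which the projection onto $\a$ would not decouple the polarized components. Once this is noted, the lemma follows by direct substitution.
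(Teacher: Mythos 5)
Your proposal is correct and follows essentially the same route as the paper: project \eref{eq:governH_0} onto $\a=(1,i,0)/2$ to get \eref{BB1}, then eliminate $H^-$ via the triple-product identity, using $H_z=0$ and $\na\cdot\H=0$. The only difference is that the paper imports that identity from \cite{Song2013} without proof, whereas you carry out the computation explicitly (and correctly).
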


According to Lemma \ref{lemma:H+}, the inverse problem is reduced to solve the first-order partial differential equation \eref{eq:H+} for $\gamma$. Unfortunately, it may not be possible to solve the first-order partial differential equation \eref{eq:H+}. As the direction vector field of $\mathcal L H^+$ is not a real-valued function, the method of characteristics can not be applied.
Indeed, H\"ormander \cite{Holmander1966} and Lewy \cite{Lewy1957} provided non-existence results for the first order partial differential equation with complex-valued coefficients. To be precise, the governing equation \eref{eq:H+} for $H^+$ can be rewritten in the standard form $F\cdot\na u = f(\cdot,u)$, where $F=\mathcal L H^+$, $u=\log\gamma$, and $f(\cdot,u)=i\omega\mu_0e^{u}H^+-\Delta H^+$.
According to the Cauchy-Kowalevski theorem \cite{Kowalevsky1875}, the equation $F\cdot\na u = f(\cdot,u)$ with  suitable initial data, can be locally solvable only when $f$ is analytic. On the other hand, this local solvability can not be guaranteed for general $f\in \mathcal C^\infty$ from Lewy's example \cite{Lewy1957}. This is why we do not use the model \eref{eq:H+} to compute $\gamma$.


\subsection{Elliptic equation for the admittivity}\label{method1}
In this subsection, we prove that $\sigma$ and $\epsilon$ satisfy the elliptic partial differential equation \eref{ellipticPDE} which is one of our main results in this paper. This key observation follows from long and careful computations.

\begin{theorem}\label{thm:ellipticPDE}
The distributions of $\sigma$ and $\ep$ satisfy the following equation:
\begin{equation}\label{eq:poisson-sigma-epsilon}
  \na\cdot\left(A[H^+]\na\left(\begin{array}{c}
    \sigma \\
    \om\ep
  \end{array}\right)\right)
  +
  F_0[H^+]\cdot\na\left(\begin{array}{c}
    \sigma \\
    \om\ep
  \end{array}\right)=\left(\begin{array}{c}
    F_1[\sigma,\ep,H^+] \\
    F_2[\sigma,\ep,H^+]
  \end{array}\right)\q\mbox{in }~\Om,
\end{equation}
where $A[H^+]$ is a positive semi-definite matrix given by
\begin{equation}\label{eq:postiveMatix}
\hskip -0.6in A[H^+] =
    \left[\begin{array}{ccc}
      P_x^2+P_y^2 & 0 & P_xP_z+P_yQ_z \\
      0 & P_x^2+P_y^2 & P_yP_z-P_xQ_z \\
      P_xP_z+P_yQ_z & P_yP_z-P_xQ_z & P_z^2+Q_z^2
    \end{array}\right] \q\mbox{in }~\Om .
\end{equation}
Here, $F_0[H^+]$, $F_1[\sigma,\ep,H^+]$, and $F_2[\sigma,\ep,H^+]$ are given by
\begin{equation}\label{eq:matrixF0}
  F_0=
    -\left[\begin{array}{rcll}
      P_x[H^+]\na\cdot P[H^+] + Q_x[H^+]\na\cdot Q[H^+] \\
      P_y[H^+]\na\cdot P[H^+] + Q_y[H^+]\na\cdot Q[H^+] \\
      P_z[H^+]\na\cdot P[H^+] + Q_z[H^+]\na\cdot Q[H^+]
    \end{array}\right],
\end{equation}
\begin{eqnarray}
  F_1 &=& -P[H^+]\cdot\na\phi[\sigma, \ep,H^+]+Q[H^+]\cdot\na\psi[\sigma, \ep,H^+]+E[\om\ep,H^+], \label{eq:defF1} \\
  F_2 &=& -Q[H^+]\cdot\na\phi[\sigma, \ep,H^+]-P[H^+]\cdot\na\psi[\sigma, \ep,H^+]-E[\sigma,H^+], \label{eq:defF2}
\end{eqnarray}
where $P[H^+]$, $Q[H^+], E[\eta,H^+]$, $\phi[\sigma, \ep,H^+]$ and $\psi[\sigma, \ep, H^+]$ are defined by
\begin{eqnarray}
  && P=(P_x,P_y,P_z)= \left(-\f{\p}{\p x}H^+_r-\f{\p}{\p y}H^+_i,~ \f{\p}{\p x}H^+_i-\f{\p}{\p y}H^+_r,~ -\f{\p}{\p z}H^+_r\right), \label{eq:defP} \\
  && Q=(Q_x,Q_y,Q_z)= \left(\f{\p}{\p x}H^+_i - \f{\p}{\p y}H^+_r,~ \f{\p}{\p x}H^+_r + \f{\p}{\p y}H^+_i,~ \f{\p}{\p z}H^+_i\right), \label{eq:defQ} \\
  && E[\eta,H^+]=Q[H^+]\cdot\na(P[H^+]\cdot\na \eta)-P[H^+]\cdot\na(Q[H^+]\cdot\na \eta), \label{eq:defE} \\
  && \phi= \om\mu_0H^+_i\sigma^2 - \om^3\mu_0H^+_i\ep^2 + 2\om^2\mu_0H^+_r\sigma\ep + \Delta H^+_r\sigma -\om\Delta H^+_i\ep, \label{eq:defphi} \\
  && \psi=-\om\mu_0H^+_r\sigma^2 + \om^3\mu_0H^+_r\ep^2 + 2\om^2\mu_0H^+_i\sigma\ep + \Delta H^+_i\sigma +\om\Delta H^+_r\ep. \label{eq:defpsi}
\end{eqnarray}
\end{theorem}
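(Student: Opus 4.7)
The plan is to start from Lemma \ref{lemma:H+}, clear the $1/\gamma$ factor by multiplying through by $\gamma$, and then decompose the resulting complex first-order equation into its real and imaginary parts to obtain a $2\times 2$ first-order system for $(\sigma,\om\ep)$. Writing $\mathcal L H^+$ in components and using $H^+=H^+_r+iH^+_i$, I would verify that $\mathcal L H^+ = P-iQ$ (as a complex-valued vector field), where $P$ and $Q$ are exactly those given in \eref{eq:defP}--\eref{eq:defQ}; in particular the identities $P_y=Q_x$ and $Q_y=-P_x$ fall out of comparing the first two components. Expanding $(P-iQ)\cdot(\na\sigma+i\om\na\ep) - i\om\mu_0(\sigma+i\om\ep)^2 H^+ + (\sigma+i\om\ep)\Delta H^+=0$ and separating parts, the quadratic-in-$(\sigma,\ep)$ contributions together with the $\Delta H^+$ terms are then checked to coincide with $\phi$ and $\psi$ in \eref{eq:defphi}--\eref{eq:defpsi}. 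This yields the first-order system
\begin{equation*}
  P\cdot\na\sigma+Q\cdot\na(\om\ep)=-\phi,\qquad -Q\cdot\na\sigma+P\cdot\na(\om\ep)=-\psi.
\end{equation*}

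Next I would promote this first-order system to the desired second-order elliptic system by applying $P\cdot\na$ to the first equation, $-Q\cdot\na$ to the second, and adding — this produces the equation for $\sigma$; applying $Q\cdot\na$ to the first and $P\cdot\na$ to the second and adding produces the equation for $\om\ep$. The cross terms that couple the two unknowns take the form $P\cdot\na(Q\cdot\na v)-Q\cdot\na(P\cdot\na v)$, and the crucial observation is that the second-order pieces of this commutator cancel by the symmetry $P_iQ_j\p_i\p_j v=Q_iP_j\p_i\p_j v$, leaving only first-order derivatives of $v$. By the definition \eref{eq:defE}, this commutator is precisely $-E[v,H^+]$, so $E$ enters each equation as a lower-order term in the \emph{other} unknown, exactly as in $F_1,F_2$.

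To put the principal part in divergence form, I would use the identity
\begin{equation*}
  P\cdot\na(P\cdot\na u)+Q\cdot\na(Q\cdot\na u)
  =\na\cdot\bigl((P\otimes P+Q\otimes Q)\na u\bigr)-(\na\cdot P)(P\cdot\na u)-(\na\cdot Q)(Q\cdot\na u),
\end{equation*}
and then verify, entry by entry using $Q_x=P_y$ and $Q_y=-P_x$, that $P\otimes P+Q\otimes Q$ equals the matrix $A[H^+]$ in \eref{eq:postiveMatix}. The two lower-order terms $-(\na\cdot P)(P\cdot\na u)-(\na\cdot Q)(Q\cdot\na u)$ are exactly $F_0\cdot\na u$ by definition \eref{eq:matrixF0}. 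Positive semi-definiteness of $A[H^+]$ is immediate from $\xi^\top A\xi=(P\cdot\xi)^2+(Q\cdot\xi)^2\ge 0$.

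The bookkeeping in the second step — tracking all the $\sigma^2,\ep^2,\sigma\ep,\Delta H^+\sigma$ and $\Delta H^+\ep$ terms through the real/imaginary split and confirming they assemble into $\phi$ and $\psi$ with the correct signs — is by far the most error-prone part, and I would do it once symbolically before combining with the commutator identity for $E$. Everything else is either linear algebra (assembling $A[H^+]$ from $P\otimes P+Q\otimes Q$) or a clean algebraic manipulation (converting iterated directional derivatives into divergence-plus-drift form), so the main obstacle is really the careful expansion of $-i\om\mu_0\gamma^2 H^+$ and $\gamma\Delta H^+$ and making sure no term is dropped.
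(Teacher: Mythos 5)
Your proposal follows essentially the same route as the paper: split the first-order equation of Lemma \ref{lemma:H+} (after clearing the $1/\gamma$ factor) into real and imaginary parts to obtain the system $P\cdot\na\sigma+Q\cdot\na(\om\ep)=-\phi$, $-Q\cdot\na\sigma+P\cdot\na(\om\ep)=-\psi$, apply $P\cdot\na$ and $Q\cdot\na$ and combine so that the second-order cross terms cancel and only the commutator $E$ survives, then pass to divergence form with the drift $F_0$. Your observation that $A=P\otimes P+Q\otimes Q$ (via $Q_x=P_y$, $Q_y=-P_x$), so that $\xi^{\top}A\xi=(P\cdot\xi)^2+(Q\cdot\xi)^2\ge 0$, is a slightly cleaner semi-definiteness argument than the paper's characteristic-polynomial computation, but the substance of the proof is identical.
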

\begin{proof}
We first separate the governing equation \eref{eq:H+} into its real and imaginary parts. Let $H^+_r$ and $H^+_i$ be the real and imaginary parts of $H^+$, i.e., $H^+ = H^+_r + iH^+_i$.
Let $\gamma_i$ be the imaginary part of the admittivity.

Equation \eref{eq:H+} can be expressed as
\begin{equation}\label{eq:Govern-2}
  -\Delta H^+ \gamma
  =
  \left(-\f{\p H^+}{\p x} + i\f{\p H^+}{\p y}\right)\f{\p\gamma}{\p x}
  -\left(i\f{\p H^+}{\p x} + \f{\p H^+}{\p y}\right)\f{\p\gamma}{\p y}
  -\f{\p H^+}{\p z}\f{\p\gamma}{\p z}
  -i\om\mu_0H^+\gamma^2 .
\end{equation}
The real and imaginary parts of equation \eref{eq:Govern-2} are given, respectively, by
\begin{eqnarray}
  \Delta H^+_r\sigma-\Delta H^+_i\gamma_i
  &=&
  \left(\f{\p H^+_r}{\p x}+\f{\p H^+_i}{\p y}\right)\f{\p\sigma}{\p x}
  -\left(\f{\p H^+_i}{\p x}-\f{\p H^+_r}{\p y}\right)\f{\p\gamma_i}{\p x} \nonumber
  \\
  &~& \hspace{-1cm}
  +\left(\f{\p H^+_r}{\p y}-\f{\p H^+_i}{\p x}\right)\f{\p\sigma}{\p y}
  -\left(\f{\p H^+_i}{\p y}+\f{\p H^+_r}{\p x}\right)\f{\p\gamma_i}{\p y} \nonumber
  \\
  &~& \hspace{-1cm}
  +\f{\p H^+_r}{\p z}\f{\p\sigma}{\p z}
  -\f{\p H^+_i}{\p z}\f{\p\gamma_i}{\p z}
  -\om\mu_0\left(2H^+_r\sigma\gamma_i+H^+_i(\sigma^2-\gamma_i^2)\right) \label{eq:realGovern-2}
\end{eqnarray}
and
\begin{eqnarray}
  \Delta H^+_r\gamma_i+\Delta H^+_i\sigma
  &=&
  \left(\f{\p H^+_r}{\p x}+\f{\p H^+_i}{\p y}\right)\f{\p\gamma_i}{\p x}
  +\left(\f{\p H^+_i}{\p x}-\f{\p H^+_r}{\p y}\right)\f{\p\sigma}{\p x} \nonumber
  \\
  &~& \hspace{-1cm}
  +\left(\f{\p H^+_r}{\p y}-\f{\p H^+_i}{\p x}\right)\f{\p\gamma_i}{\p y}
  +\left(\f{\p H^+_i}{\p y}+\f{\p H^+_r}{\p x}\right)\f{\p\sigma}{\p y} \nonumber
  \\
  &~& \hspace{-1cm}
  +\f{\p H^+_r}{\p z}\f{\p\gamma_i}{\p z}
  +\f{\p H^+_i}{\p z}\f{\p\sigma}{\p z}
  +\om\mu_0\left(H^+_r(\sigma^2-\gamma_i^2)-2H^+_i\sigma\gamma_i\right). \label{eq:imagGovern-2}
\end{eqnarray}
The real part \eref{eq:realGovern-2} can be written as
\begin{equation}
    P[H^+]\cdot\na\sigma + Q[H^+]\cdot\na\gamma_i + \kappa[\gamma,H^+] = 0 \quad\mbox{in}~\Omega,
    \label{eq:realGovern-3}
\end{equation}
where $\kappa:=-\om\mu_0\left(2H^+_r\sigma\gamma_i+H^+_i(\sigma^2-\gamma_i^2)\right)-\Delta H^+_r\sigma+\Delta H^+_i\gamma_i$ and $P$ and $Q$ are given in \eref{eq:defP} and \eref{eq:defQ}, respectively.
Applying $P[H^+]\cdot\na$ and $Q[H^+]\cdot\na$ on equation \eref{eq:realGovern-3}, we obtain
\begin{equation}
  P[H^+]\cdot\na(P[H^+]\cdot\na\sigma) + P[H^+]\cdot\na(Q[H^+]\cdot\na\gamma_i) + P[H^+]\cdot\na\kappa[\gamma,H^+] ~=~ 0
  \label{eq:+PP+PQ}
\end{equation}
and
\begin{equation}
  Q[H^+]\cdot\na(P[H^+]\cdot\na\sigma) + Q[H^+]\cdot\na(Q[H^+]\cdot\na\gamma_i) + Q[H^+]\cdot\na\kappa[\gamma,H^+] ~=~ 0.
  \label{eq:+QP+QQ}
\end{equation}

Similarly, the imaginary part \eref{eq:imagGovern-2} can be written as
\begin{equation}
    -Q[H^+]\cdot\na\sigma + P[H^+]\cdot\na\gamma_i + \tau[\gamma,H^+] = 0 \quad\mbox{in}~\Omega
    \label{eq:imagGovern-3}
\end{equation}
where $\tau:=\om\mu_0\left(H^+_r(\sigma^2-\gamma_i^2)-2H^+_i\sigma\gamma_i\right)-\Delta H^+_r\gamma_i-\Delta H^+_i\sigma$ and $P$ and $Q$ are given in \eref{eq:defP} and \eref{eq:defQ}, respectively. Applying $P[H^+]\cdot\na$ and $Q[H^+]\cdot\na$ on equation \eref{eq:imagGovern-3}, we obtain
\begin{equation}
  -Q[H^+]\cdot\na(Q[H^+]\cdot\na\sigma) + Q[H^+]\cdot\na(P[H^+]\cdot\na\gamma_i) + Q[H^+]\cdot\na\tau[\gamma,H^+] ~=~ 0
  \label{eq:-QQ+QP}
\end{equation}
and
\begin{equation}
  -P[H^+]\cdot\na(Q[H^+]\cdot\na\sigma) + P[H^+]\cdot\na(P[H^+]\cdot\na\gamma_i) + P[H^+]\cdot\na\tau[\gamma,H^+] ~=~ 0.
  \label{eq:-PQ+PP}
\end{equation}

Subtracting \eref{eq:+PP+PQ} from \eref{eq:-QQ+QP} yields
\begin{equation}
  P[H^+]\cdot\na(P[H^+]\cdot\na\sigma)+Q[H^+]\cdot\na(Q[H^+]\cdot\na\sigma)
  =
  F_1[\sigma,\ep,H^+],
  \label{eq:PPQQ1}
\end{equation}
where $F_1$ is given in \eref{eq:defF1}.
Similarly, subtracting \eref{eq:+QP+QQ} from \eref{eq:-PQ+PP} yields
\begin{equation}
  P[H^+]\cdot\na(P[H^+]\cdot\na\gamma_i)+Q[H^+]\cdot\na(Q[H^+]\cdot\na\gamma_i)
  =
  F_2[\sigma,\ep,H^+]
  \label{eq:PPQQ2}
\end{equation}
with $F_2$ being given by \eref{eq:defF2}.
A direct computation shows that equation \eref{eq:PPQQ1} can be expressed as
\begin{eqnarray}
  \na\cdot\left(\left[\begin{array}{ccc}
    P_x^2+Q_x^2 & P_xP_y+Q_xQ_y & P_xP_z+Q_xQ_z \\
    P_xP_y+Q_xQ_y & P_y^2+Q_y^2 & P_xP_z+Q_xQ_z \\
    P_xP_z+Q_xQ_z & P_xP_z+Q_xQ_z & P_z^2+Q_z^2
  \end{array}\right]\na\sigma\right)
  \nonumber
  \\
  +F_0[H^+]\cdot\na\sigma
  =F_1[\sigma,\epsilon,H^+].
  \label{eq:elliptic1}
\end{eqnarray}
Since $P_x=-Q_y$ and $P_y=Q_x$, $P_xP_y+Q_xQ_y=0$ and therefore, equation \eref{eq:elliptic1} can be rewritten as
\begin{equation}
  \na\cdot\left(A[H^+]\na\sigma\right)
  +
  F_0[H^+]\cdot\na\sigma
  =
  F_1[\sigma,\ep,H^+]
  \q\mbox{in }~\Om.
\end{equation}
Similarly, \eref{eq:PPQQ2} gives
\begin{equation}
  \na\cdot\left(A[H^+]\na\epsilon\right)
  +
  F_0[H^+]\cdot\na\epsilon
  =
  F_2[\sigma,\ep,H^+]
  \q\mbox{in }~\Om.
\end{equation}

Now, it remains to prove that the matrix $A$ is positive semi-definite matrix. A direct computation shows
\begin{equation}\label{eigenvalue_A}
  \det(A-\lambda I) = -\lambda~(\lambda-(P_x^2+P_y^2))~(\lambda-(P_x^2+P_y^2+P_z^2+Q_z^2)),
\end{equation}
where $\det$ denotes the determinant.
Hence, all the eigenvalues of the matrix $A$ are non-negative.
\end{proof}


\subsection{Approximate solution}\label{subsec:Approximate solution}
Using the elliptic partial differential equation \eref{eq:poisson-sigma-epsilon} in Theorem \ref{thm:ellipticPDE}, we can compute a fairly good approximation of the true admittivity. Since the matrix $A$ in \eref{eq:poisson-sigma-epsilon} is degenerate, we need a regularization strategy.
By adding a regularization term $\rho\e_3^T\e_3$ to the matrix $A$, we can compute viscosity solution $U^\rho=(\sigma^\rho,\omega\ep^\rho)^T$ of the elliptic partial differential equation \eref{eq:poisson-sigma-epsilon}:
\begin{equation}\label{eq:poisson-U}
  \na\cdot\left((A[H^+]+\rho\e_3^T\e_3)\na U^\rho\right)
  + F_0[H^+]\cdot\na U^\rho
  =
  \left(\begin{array}{c}
    F_1[  U^\rho,H^+] \\
    F_2[  U^\rho,H^+]
  \end{array}\right)
\end{equation}
with the Dirichlet boundary condition $U_0=(\sigma_0,\omega\ep_0)^T$ on $\partial \Omega$, where $\e_3=(0,0,1)$, superposed $T$ denotes the transpose, and $\rho$ is a small positive constant.
%
Note that the matrix $A+\rho\e_3^T\e_3$ is positive definite, since the eigenvalues of the matrix $A+\rho\e_3^T\e_3$ are
\begin{eqnarray*}
  \lambda_1 &=& P_x^2+P_y^2 \\
  \lambda_2,~\lambda_3 &=& \frac{(P_x^2+P_y^2+P_z^2+Q_z^2+\rho)\pm\sqrt{(P_x^2+P_y^2+P_z^2+Q_z^2+\rho)^2-4(P_x^2+P_y^2)\rho}}{2}.
\end{eqnarray*}
By solving equation \eref{eq:poisson-U}, we can get the blurred admittivity image of the true distribution.

\section{Adjoint-based optimization method}\label{method2}
This section presents adjoint-based optimization method for finding admittivity distribution. A Newton iteration is used to find optimal solution, hence, a fairly good initial guess is required. The approximated solution in section \ref{subsec:Approximate solution} is used as an initial guess of the Newton iteration.
Let $H^+_m \in H^1(\Omega)$ be the measured data corresponding to the true admittivity $\gamma^\ast \in \mathcal{A}$; hence $H^+_m$ satisfies
$$
  \mathcal L H^+_m
  \cdot
  \f{\na \gamma^\ast}{\gamma^\ast} - i\omega\mu_0 \gamma^\ast ~H^+_m +\Delta H^+_m
  =
  0 \quad \mbox{in } \Omega.
$$
For $\gamma \in \mathcal{A}$, let $H^+[\gamma]$ be a solution of the Dirichlet problem:
\begin{equation}\label{EPTmap}
  \left\{\begin{array}{rcll}
    \mathcal L H^+[\gamma]
    \cdot
    \f{\na \gamma}{\gamma} - i\omega\mu_0\gamma ~H^+[\gamma]+\Delta H^+[\gamma]
    &=&
    0 & \mbox{in } \Omega,
    \\
    H^+ &=& H^+_m & \mbox{on } \partial\Omega.
  \end{array}\right.
\end{equation}
The equation \eref{EPTmap} has a unique solution for properly chosen $c_1$ in the definition of $\mathcal{A}$ in \eref{defA}. From now on, we assume that $c_1$ is chosen so that \eref{EPTmap} has a unique solution. Then, the map
\begin{equation}
  \gamma\in\mathcal{A} ~\mapsto~ H[\gamma]
\end{equation}
is well-defined.

We define the misfit function $J[\gamma]$ of the variable $\gamma=\sigma+i\om\ep$ by the $L^2$-norm of the difference between $H^+[\gamma]$ in \eref{EPTmap} and the measured data $H^+_m$:
\begin{equation}\label{minproblem}
  J[\gamma]=\f{1}{2}\int_{\Omega}|H^+[\gamma]- H^+_m|^2d\r.
\end{equation}
Since $J[\gamma] = \f{1}{2}\left\|H^+[\gamma]- H^+_m\right\|_{L^2(\Om)}^2$, $J[\gamma] \ge 0$ and $J[\gamma]$ has minimum 0 at $H^+[\gamma]=H^+_m$. In this minimization problem, we need to determine the Fr\'echet derivative of the misfit function $J$ with respect to the control variable $\gamma$.
Let $\widetilde{\mathcal A}$ be defined by
$$
  \widetilde{\mathcal A} = \left\{\delta\gamma\in H^2(\Omega)\cap L^\infty_{\underline{\lambda}, \overline{\lambda}}(\Om) ~|~  ~ \left. \delta\gamma\right|_{\Omega_d}= 0\right\}.
$$
The following theorem proves the Fr\'echet differentiability of $H^+[\gamma]$ under the assumption that $c_1 <1$.
\begin{theorem}\label{thm:H-diff}
Let $c_1 < 1$. For $\gamma \in \mathcal A$, the map $\gamma \mapsto H^+$ is Fr\'echet differentiable. Let $\delta \in \widetilde{\mathcal A}$ be such that $\gamma + \delta \in \mathcal A$. The Fr\'echet derivative $DH^+[\gamma](\delta)$ at $\delta$ is given by the solution $u$ of the following equation
\begin{equation}\label{eq:H-diff}
  \left\{\begin{array}{rcll}
    \mathcal Lu\cdot\f{\na \gamma}{\gamma} - i\om\mu_0\gamma u + \Delta u &=& -\left(\mathcal LH^+[\gamma]\cdot\na\left(\f{\delta}{\gamma}\right) -i\om\mu_0\delta H^+[\gamma]\right) & \mbox{in}~\Om ,\\
    u &=& 0 & \mbox{on}~\p\Om .
  \end{array}\right.
\end{equation}
\end{theorem}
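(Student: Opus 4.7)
The plan is to follow the standard implicit-function-theorem recipe: first show that the linear problem \eqref{eq:H-diff} has a unique solution $u$ depending continuously on $\delta$, then verify that $u$ is in fact the first-order variation of $H^+[\gamma]$ by estimating the second-order residual.

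\textbf{Step 1 (solvability of the linearization).} Rewrite \eqref{eq:H-diff} as $\Delta u + Ku = f_\delta$, where
$$Ku := \mathcal{L}u\cdot\frac{\nabla\gamma}{\gamma} - i\omega\mu_0\gamma\, u, \qquad f_\delta := -\mathcal{L}H^+[\gamma]\cdot\nabla(\delta/\gamma) + i\omega\mu_0 \delta H^+[\gamma].$$
Since $\gamma\in\mathcal A$, both $\|\gamma\|_{H^2}$ and $\|\nabla\gamma/\gamma\|_{H^2}$ are controlled by $c_1$; the Sobolev embedding $H^2(\Omega)\hookrightarrow L^\infty(\Omega)$ in three dimensions (combined with $H^1\hookrightarrow L^6$) turns this into an $L^\infty$ bound on the coefficients of $K$. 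Applying $(-\Delta)_{D}^{-1}$ (the Dirichlet inverse) to the equation converts it into a fixed-point problem on $H^2\cap H^1_0$ whose contraction constant is proportional to $c_1$. The hypothesis $c_1<1$ makes the Neumann series converge, giving existence, uniqueness, and the a priori estimate $\|u\|_{H^2}\le C\|f_\delta\|_{L^2}$; the lower bound $\underline{\lambda}\le\Re\gamma$ in the definition of $\mathcal A$ guarantees $\delta/\gamma\in H^2$ so that $f_\delta\in L^2$.

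\textbf{Step 2 (formal linearization).} Let $v := H^+[\gamma+\delta]-H^+[\gamma]$. Subtracting the equations \eqref{EPTmap} for $\gamma+\delta$ and for $\gamma$, and using the expansion
$$\frac{\nabla(\gamma+\delta)}{\gamma+\delta} = \frac{\nabla\gamma}{\gamma} + \nabla\!\left(\frac{\delta}{\gamma}\right) + r_1(\gamma,\delta),$$
where $r_1$ is quadratic in $\delta$ (coming from $1/(\gamma+\delta) = 1/\gamma - \delta/\gamma^2 + O(\delta^2)$), one finds that $v$ satisfies $(\Delta+K)v = f_\delta + \rho(v,\delta)$ with zero Dirichlet data. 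Here $\rho(v,\delta)$ collects cross terms of the schematic form $\mathcal{L}v\cdot\nabla(\delta/\gamma)$, $\mathcal{L}v\cdot r_1$, $\mathcal{L}H^+\cdot r_1$, and $-i\omega\mu_0\delta\, v$.

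\textbf{Step 3 (residual estimate).} Step 1 applied to the equation for $v$ first yields $\|v\|_{H^2}\le C\|\delta\|_{H^2}$ after absorbing the nonlinear part on the left once $\|\delta\|_{H^2}$ is small. Now set $w := v-u$; subtracting \eqref{eq:H-diff} gives $(\Delta+K)w = \rho(v,\delta)$ with zero boundary data. Each term in $\rho$ is at least bilinear in $(v,\delta)$ or their derivatives, so the $H^2\hookrightarrow L^\infty$ and $H^1\hookrightarrow L^6$ embeddings bound it in $L^2$ by $C\|v\|_{H^2}\|\delta\|_{H^2}\le C\|\delta\|_{H^2}^2$. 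Inverting $\Delta+K$ as in Step 1 yields $\|w\|_{H^2}\le C\|\delta\|_{H^2}^2$, which is precisely the Fréchet-differentiability bound. The map $\delta\mapsto u$ is linear and bounded by Step 1, so it is the Fréchet derivative.

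\textbf{Main obstacle.} The delicate point is the bookkeeping of $\rho(v,\delta)$: one must identify the precise algebraic form of the remainder $r_1$ and check that \emph{every} contribution carries two factors of $\delta$ or one factor of $\delta$ and one factor of $v$, so that no term is merely $O(\|\delta\|_{H^2})$. Uniformly invoking the three-dimensional Sobolev embeddings on products of $H^1$ and $H^2$ functions is what makes the residual estimate $O(\|\delta\|_{H^2}^2)$ rather than $O(\|\delta\|_{H^2})$, and the smallness $c_1<1$ is used twice: to close the Neumann series in Step 1 and to absorb the $\|v\|_{H^2}$ term on the left side of the equation for $v$ in Step 3.
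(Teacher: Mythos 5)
Your proposal follows essentially the same route as the paper's proof: derive the equation satisfied by $v=H^+[\gamma+\delta]-H^+[\gamma]$, use the smallness $c_1<1$ together with H\"older and the Sobolev embeddings to absorb the first-order coefficient terms and obtain $\|v\|_{H^2}\le C\|\delta\|_{H^2}$, and then show that $v-u$ solves a problem whose right-hand side is quadratic in $\delta$, yielding the $O(\|\delta\|_{H^2}^2)$ residual bound. The only cosmetic difference is that you phrase the invertibility as a Neumann-series fixed point and organize the remainder via a Taylor expansion of $\nabla(\gamma+\delta)/(\gamma+\delta)$, whereas the paper keeps the exact algebraic factors $\gamma^2/(\gamma(\gamma+\delta))$ and absorbs the first-order terms directly in the a priori estimate; these are equivalent.
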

\begin{proof}
First, remember that for $w \in H^1(\Omega)$,
\begin{equation} \label{sobolev}
|| w||_{L^4(\Omega)} \leq 2 |\Omega|^{1/12} ||w||_{H^1(\Omega)}.
\end{equation}
Then, defining
\begin{equation*}
  w_\delta := H^+[\gamma+\delta] - H^+[\gamma] \in H^1(\Om),
\end{equation*}
it follows from \eref{EPTmap} that
\begin{equation}\label{eq:H-gamma+delta}
\left\{\begin{array}{l}
  \mathcal L w_\delta\cdot\f{\na(\gamma+\delta)}{\gamma+\delta}-i\om\mu_0(\gamma+\delta)w_\delta+\Delta w_\delta = \\
  \qquad\qquad
  -\bigg(\mathcal LH^+[\gamma]\cdot\na\left(\f{\delta}{\gamma}\right) \frac{\gamma^2}{\gamma (\gamma + \delta)}-i\om\mu_0\delta H^+[\gamma]\bigg) \quad\mbox{in}~\Omega ,\\
  \left.w_\delta\right|_{\partial\Omega} =0.
\end{array}\right.
\end{equation}
Therefore, we have
\begin{equation}\label{ineq:w}
  \begin{array}{rl}
    \left\|w_\delta\right\|_{H^2(\Om)}
    \leq &
    \left\|\mathcal L w_\delta\cdot\f{\na(\gamma+\delta)}{\gamma+\delta}
    -i\om\mu_0(\gamma+\delta)w_\delta\right\|_{L^2(\Omega)}
    \\
    &~
    +\left\|\mathcal LH^+[\gamma]\cdot\na\left(\f{\delta}{\gamma}\right)\frac{\gamma^2}{\gamma(\gamma+\delta)} - i\om\mu_0\delta H^+[\gamma]\right\|_{L^2(\Om)} .
  \end{array}
\end{equation}
By H\"older's inequality and Sobolev embedding theorem (see (\ref{sobolev})), the first term of the right-hand side of \eref{ineq:w} can be estimated by
\begin{equation}\label{ineq:w-r1}
  \left\|\mathcal L w_\delta\cdot\f{\na(\gamma+\delta)}{\gamma+\delta}
  -i\om\mu_0(\gamma+\delta)w_\delta\right\|_{L^2(\Omega)}
  \leq
  c_1 \left\|w_\delta\right\|_{H^2(\Omega)}.
\end{equation}
Combining \eref{ineq:w} and \eref{ineq:w-r1}, we have
\begin{equation}\label{ineq:w-2}
    \left(1- c_1 \right) \left\|w_\delta\right\|_{H^2(\Om)}
    \leq
    \left\|\mathcal LH^+[\gamma]\cdot\na\left(\f{\delta}{\gamma}\right)\frac{\gamma^2}{\gamma(\gamma+\delta)} - i\om\mu_0\delta H^+[\gamma]\right\|_{L^2(\Om)}.
\end{equation}
By H\"older's inequality and Sobolev embedding theorem, \eref{ineq:w-2} can be estimated by
\begin{equation}\label{ineq:w-3}
  \left\|w_\delta\right\|_{H^2(\Om)}
  \leq
  C'\left\|\delta\right\|_{H^2(\Om)}\left\|H^+[\gamma]\right\|_{H^2(\Om)}
\end{equation}
if $c_1<1$.

Since the data difference $w_\delta$ satisfies \eref{eq:H-gamma+delta} and $u$ is the solution of equation \eref{eq:H-diff}, the difference $w_\delta-u \in H^1(\Om)$ satisfies
\begin{equation}\label{eq:w-u}
\begin{array}{r}
  \mathcal L (w_\delta-u)\cdot\f{\na\gamma}{\gamma}-i\om\mu_0\gamma(w_\delta-u)+\Delta(w_\delta-u)
  ~=~
  -\bigg(\mathcal Lw_\delta\cdot\na\left(\f{\delta}{\gamma}\right) \frac{\gamma^2}{\gamma (\gamma + \delta)}
  \\
  -i\om\mu_0\delta w_\delta + \mathcal L H^+[\gamma] \cdot\na\left(\f{\delta}{\gamma}\right)  \frac{\gamma}{\gamma + \delta} \bigg).
  \end{array}
\end{equation}
From the standard estimation of the Poisson equation, we have
\begin{equation}\label{ineq:w-u}
  \begin{array}{l}
    \|w_\delta-u\|_{H^2(\Om)}
    \leq
    \left\|
      \mathcal L (w_\delta-u)\cdot\f{\na\gamma}{\gamma}-i\om\mu_0\gamma(w_\delta-u)
    \right\|_{L^2(\Omega)}
    \\ \qquad\qquad
    +\left\|
      \mathcal Lw_\delta\cdot\na\left(\f{\delta}{\gamma}\right)\frac{\gamma^2}{\gamma(\gamma+\delta)}
    -i\om\mu_0\delta w_\delta + \mathcal L H^+[\gamma]\cdot\na\left(\f{\delta}{\gamma}\right)\frac{\gamma}{\gamma+\delta}
    \right\|_{L^2(\Omega)}.
  \end{array}
\end{equation}
Again, by  H\"older's inequality and Sobolev embedding theorem, the first term of the right-hand side of \eref{ineq:w-u} can be estimated by
\begin{equation}\label{ineq:w-u-r1}
    \left\|
      \mathcal L (w_\delta-u)\cdot\f{\na\gamma}{\gamma}-i\om\mu_0\gamma(w_\delta-u)
    \right\|_{L^2(\Omega)}
    \leq
    c_1 \left\|w_\delta-u\right\|_{H^2(\Omega)}.
\end{equation}
Combining \eref{ineq:w-u} and \eref{ineq:w-u-r1}, we have
\begin{equation}\label{ineq:w-u-2}
  \begin{array}{l}
    \left(1- c_1 \right)\left\|w_\delta-u\right\|_{H^2(\Omega)}
    \leq
    \\
    \qquad
    \left\|
      \mathcal Lw_\delta\cdot\na\left(\f{\delta}{\gamma}\right)\frac{\gamma^2}{\gamma(\gamma+\delta)} - i\om\mu_0\delta w_\delta + \mathcal L H^+[\gamma]\cdot\na\left(\f{\delta}{\gamma}\right)\frac{\gamma}{\gamma+\delta}
    \right\|_{L^2(\Omega)}.
  \end{array}
\end{equation}
%
%
By  H\"older's inequality and Sobolev embedding theorem, \eref{ineq:w-u-2} can be estimated by
\begin{equation}\label{ineq:w-u-3}
  \left\|w_\delta-u\right\|_{H^2(\Om)}
  \leq
  C_1\left\|\delta\right\|_{H^2(\Omega)} \left\|w_\delta\right\|_{H^2(\Omega)}
  +
  C_2\left\|\delta\right\|_{H^2(\Omega)} \left\|H^+[\gamma]\right\|_{H^2(\Omega)}
\end{equation}
if $c_1<1$.

By inequalities \eref{ineq:w-3} and \eref{ineq:w-u-3}, it follows that
\begin{equation}
  \left\|w_\delta-u\right\|_{H^2(\Om)}
  \leq
  C_1'\left\|\delta\right\|_{H^2(\Omega)}^2 \left\|H^+[\gamma]\right\|_{H^2(\Omega)}
  +
  C_2\left\|\delta\right\|_{H^2(\Omega)} \left\|H^+[\gamma]\right\|_{H^2(\Omega)}.
\end{equation}
%
%
Thus,
\begin{equation*}
  \f{\|H^+[\gamma+\delta]-H^+[\gamma]-u\|_{H^2(\Om)}}{\|\delta\|_{H^2(\Om)}} \rightarrow 0 \q \mbox{as}\q \|\delta\|_{H^2(\Om)}\rightarrow 0.
\end{equation*}
Hence, $u$ is the Fr\'echet derivative of $H^+[\gamma]$ at $\delta$, that is,  $DH^+[\gamma](\delta) =u$.
\end{proof}

The following theorem expresses the Fr\'echet derivative of $J[\gamma]$.
\begin{theorem}\label{thm:J-diff}
For $\gamma=\sigma+i\om\epsilon \in \mathcal{A}$, the Fr\'echet derivative of $J[\gamma]$ at $\delta\in\widetilde{\mathcal A}$ being such that $\gamma + \delta \in \mathcal A$ is given by
\begin{eqnarray}\label{eq:DJgammadelta}
  DJ[\gamma](\delta) &=& \Re \int_\Om\delta \left(\f{1}{\gamma}\na\cdot\left(p\mathcal{L}H^+[\gamma]\right)
  +i\om\mu_0H^+[\gamma]p\right)d\r,
\end{eqnarray}
where $p$ is the solution of the adjoint problem:
\begin{equation}\label{eq:adjoint-problem}
  \left\{\begin{array}{rcll}
  \Delta p+ \mathcal{L}\cdot\left(p\f{\na\gamma}{\gamma}\right) - i\omega\mu_0\gamma p
  &=& \overline{H^+[\gamma]-H^+_m} & \mbox{in}~ \Omega, \\
  p &=& 0 & \mbox{on}~ \partial\Omega.
  \end{array}\right.
\end{equation}
\end{theorem}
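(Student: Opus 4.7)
The plan is to use the standard adjoint-state method, linking the Fr\'echet derivative of $H^+$ from Theorem \ref{thm:H-diff} to the adjoint equation \eref{eq:adjoint-problem}. Since $J[\gamma]=\f12\|H^+[\gamma]-H^+_m\|^2_{L^2(\Om)}$, the chain rule combined with Theorem \ref{thm:H-diff} gives
\begin{equation*}
  DJ[\gamma](\delta) \;=\; \Re\int_\Om \overline{(H^+[\gamma]-H^+_m)}\, u\, d\r,
\end{equation*}
where $u := DH^+[\gamma](\delta) \in H^2(\Om)\cap H^1_0(\Om)$ solves the linearized equation \eref{eq:H-diff}.

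\textbf{Duality step.} Next I would substitute \eref{eq:adjoint-problem} for $\overline{H^+[\gamma]-H^+_m}$ and integrate by parts to move all derivatives from $p$ onto $u$. Since both $u$ and $p$ vanish on $\p\Om$, the two integrations by parts for $\int_\Om u\,\Delta p\,d\r$ produce no boundary terms and yield $\int_\Om p\,\Delta u\,d\r$, while the first-order term $\int_\Om u\,\mathcal L\cdot(p\f{\na\gamma}{\gamma})\,d\r$ is handled via the formal adjoint of the complex operator $\mathcal L\cdot$ acting on vector fields, producing a term of the form $\int_\Om p\,\mathcal L u\cdot\f{\na\gamma}{\gamma}\,d\r$. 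The outcome is the duality identity
\begin{equation*}
  DJ[\gamma](\delta) \;=\; \Re\int_\Om p\,\Big(\Delta u + \mathcal L u\cdot\f{\na\gamma}{\gamma} - i\om\mu_0\gamma u\Big)\,d\r.
\end{equation*}

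\textbf{Closing the argument.} The bracketed operator is exactly the one appearing on the left-hand side of \eref{eq:H-diff}, so I would replace it by the right-hand side of \eref{eq:H-diff}, which is linear in $\delta$:
\begin{equation*}
  DJ[\gamma](\delta) \;=\; \Re\int_\Om p\,\Big(-\mathcal L H^+[\gamma]\cdot\na\big(\tfrac{\delta}{\gamma}\big) + i\om\mu_0\,\delta\,H^+[\gamma]\Big)\,d\r.
\end{equation*}
A final integration by parts on the gradient term, legitimate because $\delta$ vanishes on the boundary strip $\Om_d$ so that no boundary contribution arises, gives
\begin{equation*}
  -\int_\Om p\,\mathcal L H^+[\gamma]\cdot\na\big(\tfrac{\delta}{\gamma}\big)\,d\r \;=\; \int_\Om \tfrac{\delta}{\gamma}\,\na\cdot\big(p\,\mathcal L H^+[\gamma]\big)\,d\r.
\end{equation*}
Factoring out $\delta$ in the resulting integrand yields the claimed formula \eref{eq:DJgammadelta}.

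\textbf{Main obstacle.} The technical heart is the duality step: one must correctly identify the formal adjoint of the complex first-order operator $\mathcal L\cdot$ acting on vector fields, with $\mathcal L=(-\p_x+i\p_y,\,-i\p_x-\p_y,\,-\p_z)$, and carefully bookkeep the signs so that the duality identity is consistent with \eref{eq:adjoint-problem} and with the LHS of \eref{eq:H-diff}. A subordinate issue is solvability of the adjoint problem \eref{eq:adjoint-problem}: under $c_1<1$ the adjoint operator shares the same principal part (Laplacian plus a first-order multiplier built from $\f{\na\gamma}{\gamma}$) as the direct problem in Theorem \ref{thm:H-diff}, so the analogous $L^2$--$H^2$ contraction estimate yields existence and uniqueness of $p\in H^2(\Om)\cap H^1_0(\Om)$.
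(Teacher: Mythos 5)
Your proposal is correct and follows essentially the same adjoint-state argument as the paper: identify the linear part of $J[\gamma+\delta]-J[\gamma]$, substitute the adjoint equation \eref{eq:adjoint-problem} for $\overline{H^+[\gamma]-H^+_m}$, integrate by parts twice to transfer the operator onto the linearized field, invoke the linearized equation \eref{eq:H-diff}, and integrate by parts once more on $\na(\delta/\gamma)$. The only (harmless) difference is that you work directly with $u=DH^+[\gamma](\delta)$ via the chain rule, whereas the paper manipulates the finite difference $w_\delta=H^+[\gamma+\delta]-H^+[\gamma]$ and discards the higher-order remainder using the estimate \eref{ineq:w-3}.
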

\begin{proof}
To compute the Fr\'echet derivative of $J[\gamma]$, we consider the perturbation $J[\gamma+\delta]-J[\gamma]$:
\begin{eqnarray}\label{eq_inverse3D-2}
  J[\gamma+\delta]-J[\gamma]
  &=& \frac{1}{2}\int_\Omega\left|  H^+[\gamma+\delta]
  - H^+_m \right|^2 d\r - \frac{1}{2}\int_\Omega\left|  H^+[\gamma]
  - H^+_m \right|^2 d\r \nonumber \\
  &=& \Re\int_\Om w_\delta \overline{(H^+[\gamma]-H^+_m)}~d\r + \f{1}{2}\int_\Omega w_\delta^2 ~d\r,
\end{eqnarray}
where $w_\delta= H^+[\gamma+\delta]- H^+[\gamma]$. So,
\begin{equation}\label{eq:w-1}
  \left|J[\gamma+\delta]-J[\gamma]-\Re\int_\Om w_\delta \overline{(H^+[\gamma]-H^+_m)}~
  d\r\right|
  =
  \left|\f{1}{2}\int_{\Om}w_\delta^2~d\r\right|.
\end{equation}
By \eref{ineq:w-3},
\begin{equation*}
  \left|\f{1}{2}\int_{\Om}w_\delta^2~d\r\right|
  ~=~
  \f{1}{2}\left\|w_\delta\right\|_{L^2(\Om)}^2
  ~\leq~
  C\left\|\delta\right\|^2_{H^2(\Om)}\left\|H^+[\gamma]\right\|^2_{H^2(\Om)}.
\end{equation*}
Thus,
\begin{equation*}
  \lim_{\delta\rightarrow0}\f{\left|J[\gamma+\delta]-J[\gamma]-\Re\int_\Om w_\delta \overline{(H^+[\gamma]-H^+_m)}~d\r\right|}{\|\delta\|_{H^2(\Om)}}
  =
  0.
\end{equation*}
Therefore, the Fr\'echet derivative $DJ[\gamma](\delta)$ is $\displaystyle \Re\int_\Om w_\delta \overline{(H^+[\gamma]-H^+_m)}~d\r$.
Using the adjoint problem (\ref{eq:adjoint-problem}) with the homogeneous Dirichlet boundary condition, we get
\begin{equation*}\label{eq_inverse3D-4}
  DJ[\gamma](\delta)
  =
  \Re\int_{\Om} w_\delta\left(\mathcal{L}\cdot\left(p\f{\na\gamma}{\gamma}\right) - i\omega\mu_0\gamma p + \Delta p\right)~d\r.
\end{equation*}
On integrating by parts, it follows that
\begin{equation*}
  \int_\Om w_\delta\Delta p~d\r
  =
  \int_{\p\Om}w_\delta \f{\p p}{\p \n} ds-\int_\Om \na w_\delta\cdot\na p~d\r
  =
  -\int_{\p\Om} p \f{\p w_\delta}{\p \n} ds+\int_\Om p\Delta w_\delta~d\r.
\end{equation*}
Moreover,
\begin{equation*}
  \int_\Om w_\delta~\mathcal{L}\cdot\left(\f{\na\gamma}{\gamma}p\right)d\r
  =
  \int_{\p\Om}\mathcal{L}\cdot\left(\f{\na\gamma}{\gamma}p\right)\f{\p w_\delta}{\p \n} ds-\int_\Om \left(\f{\na\gamma}{\gamma}p\right)\cdot\mathcal{L}w_\delta~d\r .
\end{equation*}
Hence,
\begin{equation*}
  DJ[\gamma](\delta)
  =
  \Re\int_\Om p \left(\mathcal{L}w_\delta\cdot\f{\na\gamma}{\gamma}-i\omega\mu_0\gamma~w_\delta+\Delta w_\delta\right)~d\r.
\end{equation*}
Note that $w_\delta$ satisfies the following identity:
\begin{equation}\label{eq_inverse3D-3}
  \mathcal{L}w_\delta\cdot\f{\na\gamma}{\gamma}-i\omega\mu_0\gamma w_\delta + \Delta w_\delta
  =
  -\mathcal{L}H^+[\gamma+\delta]
  \cdot
  \na\left(\f{\delta}{\gamma}\right)+i\omega\mu_0\delta H^+[\gamma+\delta].
\end{equation}
So,
\begin{equation*}
  DJ[\gamma](\delta) =  \Re \int_\Om p \left(-\mathcal{L}H^+[\gamma]
  \cdot
  \na\left(\f{\delta}{\gamma}\right)
  +i\omega\mu_0\delta H^+[\gamma]\right)d\r.
\end{equation*}
Since $\mathcal{L}H^+[\gamma]\cdot\na\left(\f{\delta}{\gamma}\right)=\na\cdot\left(\f{\delta}{\gamma}\mathcal{L}H^+[\gamma]\right)-\f{\delta}{\gamma}(\na\cdot\mathcal{L}H^+[\gamma])$,
\begin{eqnarray*}
   \int_\Om p \left(-\mathcal{L}H^+[\gamma]\cdot\na\left(\f{\delta}{\gamma}\right)\right)d\r
  &=& -\int_\Om p ~\na\cdot\left(\f{\delta}{\gamma}\mathcal{L}H^+[\gamma]\right)d\r + \int_\Om p ~\f{\delta}{\gamma}(\na\cdot\mathcal{L}H^+[\gamma])d\r
  \\
  &=&
  -\int_{\p\Om} p\left(\f{\delta}{\gamma}\mathcal{L}H^+[\gamma]\right)\cdot \n ~ds+\int_\Om \na p\cdot\left(\f{\delta}{\gamma}\mathcal{L}H^+[\gamma]\right)d\r
  \\ &~&
  +\int_\Om p ~\f{\delta}{\gamma}(\na\cdot\mathcal{L}H^+[\gamma])d\r \\
  &=& \int_\Om \f{\delta}{\gamma}\left(\na p\cdot\mathcal{L}H^+[\gamma]+p\na\cdot\mathcal{L}H^+[\gamma]\right)d\r\\
  &=& \int_\Om \f{\delta}{\gamma}\na\cdot\left(p\mathcal{L}H^+[\gamma]\right)d\r.
\end{eqnarray*}
Therefore,
\begin{equation*}
  DJ[\gamma](\delta) = \Re \int_\Om\delta \left(\f{1}{\gamma}\na\cdot\left(p\mathcal{L}H^+[\gamma]\right)
  +i\om\mu_0H^+[\gamma]p\right)d\r,
\end{equation*}
which completes the proof.
\end{proof}

It is worth mentioning that the smallness assumption on the bound $c_1$ defined in (\ref{defA}) ensures the well-posedness of (\ref{eq:DJgammadelta}) with homogeneous Dirichlet boundary condition.

In the next lemma, we rewrite
the adjoint problem \eref{eq:adjoint-problem} as a second-order elliptic partial differential equation.
\begin{lemma}\label{thm:adjoint-PDE}
For $\gamma=\sigma+i\om\epsilon$, the adjoint problem \eref{eq:adjoint-problem} can be rewritten as
\begin{equation}\label{eq:adjoint-PDE}
  \Delta p + G[\gamma]\cdot\na p -(i\om\mu_0\gamma+\Delta\log\gamma) p = \overline{H^+[\gamma]-H^+_m} \quad \mbox{in } \Omega
\end{equation}
with  the Dirichlet boundary condition $p=0$ on $\partial \Omega$,
where
\begin{equation*}
  G = -\left(\f{\p \log\gamma}{\p x}+i\f{\p \log\gamma}{\p y}, -i\f{\p \log\gamma}{\p x}+\f{\p \log\gamma}{\p y}, \f{\p \log\gamma}{\p z}\right).
\end{equation*}

\begin{proof}
Denote by $v:=\log\gamma$. Since the linear operator $\mathcal L$ is given by
\begin{equation*}
  \mathcal L = \left(-\f{\p}{\p x}+i\f{\p}{\p y}, -i\f{\p}{\p x}-\f{\p}{\p y},-\f{\p}{\p z}\right) = -\na+i\left(\f{\p}{\p y},-\f{\p}{\p x},0\right),
\end{equation*}
we obtain
\begin{eqnarray}\label{eq:linear_operator1}
  \mathcal L \cdot \left(\f{\na\gamma}{\gamma}p\right) &=& \mathcal L \cdot (p\na v) = -\na\cdot(p\na v) + i\left(\f{\p}{\p y}, -\f{\p}{\p x}, 0\right)\cdot(p\na v)\nonumber\\
  &=& -(p\Delta v+\na v\cdot\na p)+i\left(\f{\p}{\p y}\left(p \f{\p v}{\p x}\right)-\f{\p}{\p x}\left(p \f{\p v}{\p y}\right)\right)\nonumber\\
  &=& -(\Delta v)p + G[\gamma]\cdot\na p.
\end{eqnarray}
Hence, if we substitute \eref{eq:linear_operator1} into the adjoint problem \eref{eq:adjoint-problem}, we get the second-order elliptic partial differential equation \eref{eq:adjoint-PDE} and the proof is complete.
\end{proof}
\end{lemma}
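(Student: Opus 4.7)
The plan is to reduce the lemma to a direct computation of the first-order term $\mathcal L\cdot(p\,\nabla\gamma/\gamma)$ and then substitute back into the adjoint equation \eref{eq:adjoint-problem}. With $v:=\log\gamma$, the first simplification is to write the complex differential operator $\mathcal L$ as
\begin{equation*}
  \mathcal L \;=\; -\nabla + i\,(\partial_y,-\partial_x,0),
\end{equation*}
which separates the real divergence part from an imaginary curl-like part and makes the subsequent expansion almost mechanical.

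Next I would expand $\mathcal L\cdot(p\,\nabla v)$ in these two pieces. The real part gives the standard product-rule expansion $-\nabla\cdot(p\,\nabla v)=-p\,\Delta v-\nabla p\cdot\nabla v$. The imaginary part gives $i\bigl[\partial_y(p\,\partial_x v)-\partial_x(p\,\partial_y v)\bigr]$; expanding and using Schwarz's equality of mixed partials (valid since $v,p\in H^2$ locally, which follows from $\gamma\in H^2$ and the standard elliptic regularity for \eref{eq:adjoint-problem}) cancels the terms containing $p$ and leaves only the first-order piece $i\bigl[(\partial_y p)(\partial_x v)-(\partial_x p)(\partial_y v)\bigr]$. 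Collecting the coefficients of $\partial_x p,\partial_y p,\partial_z p$ produces exactly the components of $-\bigl(\partial_x v + i\partial_y v,\ -i\partial_x v + \partial_y v,\ \partial_z v\bigr) = G[\gamma]$, so
\begin{equation*}
  \mathcal L\cdot\!\left(\tfrac{\nabla\gamma}{\gamma}\,p\right) \;=\; -(\Delta v)\,p + G[\gamma]\cdot\nabla p.
\end{equation*}

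Finally, substituting this identity into \eref{eq:adjoint-problem} and recalling $v=\log\gamma$ absorbs the $-(\Delta\log\gamma)\,p$ contribution into the zeroth-order coefficient alongside $-i\omega\mu_0\gamma\,p$, yielding \eref{eq:adjoint-PDE}; the Dirichlet condition $p|_{\partial\Omega}=0$ is unchanged. I expect no genuine obstacle here: the work is bookkeeping. The only point to be careful about is the cancellation of the mixed second derivatives of $v$ and the legitimacy of differentiating $v=\log\gamma$ twice, which is ensured by the admissibility class $\mathcal A$ in \eref{defA} (since $\gamma\in H^2$ is bounded away from zero via $L^\infty_{\underline\lambda,\overline\lambda}$, $\log\gamma\in H^2$ locally as well).
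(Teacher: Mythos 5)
Your proposal is correct and follows essentially the same route as the paper: the decomposition $\mathcal L=-\na+i(\p_y,-\p_x,0)$, the product-rule expansion with cancellation of the mixed partials of $\log\gamma$, and substitution back into \eref{eq:adjoint-problem}. The only difference is your added remark on the regularity needed to justify Schwarz's theorem, which the paper leaves implicit.
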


\section{Newton-type reconstruction algorithm}\label{algorithm}
In the section \ref{method2}, the Fr\'echet differentiability of the discrepancy functional $J$ is proven. To find $\gamma\in\mathcal A$ such that $J[\gamma] = 0$, we apply the Newton method. The Newton method starts from the linearization of the functional $J$:
\begin{equation}\label{eq:newton_linearization}
  J[\gamma+h] \approx J[\gamma] + DJ[\gamma](h),
\end{equation}
where $h\in\widetilde{\mathcal A}$ and $\gamma, \gamma+h \in \mathcal A$. Let $\gamma_n$ be the $n$-th iteration. Given $\gamma_n$, Newton's method seeks to find $h_n$ such that
\begin{equation}\label{eq:newton_zero}
  J[\gamma_n] + DJ[\gamma_n](h_n)=0.
\end{equation}
If we update the iteration as $\gamma_{n+1}=\gamma_n+h_n$, $J[\gamma_{n+1}] \approx 0$ by \eref{eq:newton_linearization} and \eref{eq:newton_zero}.  Note that $DJ[\gamma](kh)=kDJ[\gamma](h)$ for any real number $k$ by \eref{eq:DJgammadelta}. The following lemma shows how to find $h_n$.
\begin{lemma}\label{thm:Newton_step}
For given $\gamma_n$, $h_n = -\f{J[\gamma_n]}{DJ[\gamma_n](f_n)}f_n$ satisfies  \eref{eq:newton_zero} for any $f_n\in\widetilde{\mathcal A}$ such that $DJ[\gamma_n](f_n)\ne0$.
\begin{proof}
Note that $J[\gamma_n]$ and $DJ[\gamma_n](f_n)$ are real numbers. So, if we substitute $h_n = -\f{J[\gamma_n]}{DJ[\gamma_n](f_n)}f_n$ into \eref{eq:newton_zero}, then we obtain
\begin{equation*}
  J[\gamma_n]+DJ[\gamma_n]\left(-\f{J[\gamma_n]}{DJ[\gamma_n](f_n)}f_n\right)
  =
  J[\gamma_n]-\f{J[\gamma_n]}{DJ[\gamma_n](f_n)}DJ[\gamma_n](f_n)=0.
\end{equation*}
Hence, for any $f_n\in\widetilde{\mathcal A}$ such that $DJ[\gamma_n](f_n)\ne0$, $h_n = -\f{J[\gamma_n]}{DJ[\gamma_n](f_n)}f_n$ is the solution of  \eref{eq:newton_zero}.
\end{proof}
\end{lemma}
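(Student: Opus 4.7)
The plan is a direct substitution argument leveraging the real-linearity of the Fr\'echet derivative $DJ[\gamma_n]$ and the fact that both $J[\gamma_n]$ and $DJ[\gamma_n](f_n)$ are real numbers. First I would observe that $J[\gamma_n] = \tfrac12\|H^+[\gamma_n]-H^+_m\|_{L^2(\Om)}^2 \in \mathbb R$ by construction, and that the formula \eref{eq:DJgammadelta} from Theorem \ref{thm:J-diff} explicitly expresses $DJ[\gamma_n](f_n)$ as the real part of a complex integral, so $DJ[\gamma_n](f_n) \in \mathbb R$ as well. This ensures that the ratio $c_n := J[\gamma_n]/DJ[\gamma_n](f_n)$ is a well-defined real scalar whenever $DJ[\gamma_n](f_n) \ne 0$, so the candidate $h_n = -c_n f_n$ lies in $\widetilde{\mathcal A}$ (which is closed under real scalar multiplication, as $\widetilde{\mathcal A}$ imposes only a vanishing condition on $\Omega_d$ and the $H^2\cap L^\infty$ bounds).

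Next I would invoke the real-homogeneity of the Fr\'echet derivative: for any real $k$ and any admissible direction $f$, $DJ[\gamma_n](kf) = k\,DJ[\gamma_n](f)$. This is immediate from the explicit representation \eref{eq:DJgammadelta}, because the integrand depends linearly on $\delta$ and the real-part operator commutes with multiplication by a real scalar. Applying this with $k = -c_n$ and $f = f_n$ gives
\begin{equation*}
  DJ[\gamma_n](h_n) \;=\; DJ[\gamma_n]\!\left(-\tfrac{J[\gamma_n]}{DJ[\gamma_n](f_n)}\,f_n\right) \;=\; -\tfrac{J[\gamma_n]}{DJ[\gamma_n](f_n)}\,DJ[\gamma_n](f_n) \;=\; -J[\gamma_n].
\end{equation*}

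Adding $J[\gamma_n]$ to both sides yields $J[\gamma_n] + DJ[\gamma_n](h_n) = 0$, which is exactly \eref{eq:newton_zero}. There is no genuine analytical obstacle here; the only subtlety worth emphasising in the write-up is that $DJ[\gamma_n]$ is only $\mathbb R$-linear, not $\mathbb C$-linear (because of the $\Re$ in \eref{eq:DJgammadelta}), so it is essential that the scaling factor $c_n$ be real. Since the definition produces a real $c_n$ precisely by dividing two real quantities, the argument goes through cleanly, and the lemma follows.
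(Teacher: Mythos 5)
Your proposal is correct and follows essentially the same route as the paper: both rely on the fact that $J[\gamma_n]$ and $DJ[\gamma_n](f_n)$ are real and on the real-homogeneity $DJ[\gamma_n](kf)=k\,DJ[\gamma_n](f)$ (noted in the paper just before the lemma) to verify \eref{eq:newton_zero} by direct substitution. Your additional remarks on the $\mathbb R$-linearity (as opposed to $\mathbb C$-linearity) of the derivative and on $h_n\in\widetilde{\mathcal A}$ are sound elaborations of the same argument.
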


Lemma \ref{thm:Newton_step} proves that we can make the step size $h_n$ of \eref{eq:newton_zero} if we choose the function $f_n\in\widetilde{\mathcal A}$ such that $DJ[\gamma_n](f_n)\ne0$. Equation \eref{eq:DJgammadelta} shows that $DJ[\gamma_n](f_n)$ can be represented by $L_2$ inner product, $DJ[\gamma_n](f_n)=\Re<f_n,\overline{g_n}>$, where $g_n:=\f{1}{\gamma_n}\na\cdot\left(p_n\mathcal{L}H^+[\gamma_n]\right)+i\om\mu_0H^+[\gamma_n]p_n$. Note that $g_n$ is computed from given $\gamma_n$ and the solution of the adjoint problem \eref{eq:adjoint-problem} $p_n$. If we choose $f_n = \overline{g_n}$, then $DJ[\gamma_n](f_n) = ||\overline{g_n}||_2^2=||g_n||_2^2$. In that case, $DJ[\gamma_n](f_n) \ne0$ unless $g_n=0$ and the step size $h_n$ becomes
$$
  h_n =-\f{J[\gamma_n]}{DJ[\gamma_n](f_n)}f_n=-\f{J[\gamma_n]}{||g_n||_2^2}\overline{g_n}.
$$
So, the Newton iteration algorithm is given by
\begin{equation}
  \gamma_{n+1} = \gamma_n-\f{J[\gamma_n]}
  {||g_n||_2^2}\overline{g_n},
\end{equation}
where $g_n=\f{1}{\gamma_n}\na\cdot\left(p_n\mathcal{L}H^+[\gamma_n]\right)+i\om\mu_0H^+[\gamma_n]p_n$.
It is worth emphasizing that the Newton method guarantees the convergence only when the initial guess $\gamma_0$ is close enough to the true solution.


To find a good initial guess for $\gamma$, we use an iteration scheme to solve \eref{eq:poisson-U} with small regularization parameter $\rho$:
\begin{equation}\label{eq:newton-poisson-U}
  \na\cdot\left((A[H^+]+\rho\e_3^T\e_3)\na U_k^\rho\right)
  + F_0[H^+]\cdot\na U_k^\rho
  =
  \left(\begin{array}{c}
    F_1[  U_{k-1}^\rho,H^+] \\
    F_2[  U_{k-1}^\rho,H^+]
  \end{array}\right).
\end{equation}
Based on the above iteration scheme, we develop the following reconstruction algorithm.

\begin{enumerate}
  \item[\it Step 1.] For Given data $H^+$, compute the matrix $A[H^+]$ in \eref{eq:poisson-U}.
  \item[\it Step 2.] From the initial guess $U_0^\rho=(\sigma_0,\omega\ep_0)^T$, update the vector $U_k^\rho=(\sigma_k,\omega\ep_k)^T$ by solving the semi-elliptic PDE \eref{eq:newton-poisson-U} with the Dirichlet boundary condition $(\sigma_k,\omega\ep_k)=(\sigma^\ast,\omega\ep^\ast)$ on $\p\Om$, where $\sigma^\ast$ and $\ep^\ast$ are the true values.
  \item[\it Step 3.] For a given tolerance $\varepsilon_1$, iterate {\it Step 2} until $||\gamma^k-\gamma^\ast||\leq\varepsilon_1$, where $\gamma^k=\sigma_k+i\om\ep_k$ from $U_k^\rho=(\sigma_k,\omega\ep_k)^T$.  
      Result of the iteration $U_k^\rho=(\sigma_k,\omega\ep_k)^T$ defines the initial guess $\gamma_0=\sigma_k+i\omega\ep_k$ for the next step.
  \item[\it Step 4.] Compute $H^+[\gamma_n]$ from the given $\gamma_n$, for $n\geq0$. From t\eref{EPTmap}, the following equation for $H^+$ can be obtained:
      \begin{eqnarray}\label{eq:newton-EPTmap}
        \Delta H^+[\gamma_n] +G[\gamma_n]\cdot\na H^+[\gamma_n] - i\omega\mu_0\gamma_n ~H^+[\gamma_n]
        =
        0
      \end{eqnarray}
      with $H^+=H^+_m$ on $\partial \Omega$ and $G[\gamma_n]$ being the vector field in \eref{eq:adjoint-PDE}.
  \item[\it Step 5.] Compute the functional $J[\gamma_n]=\f{1}{2}\int_{\Omega}|H^+[\gamma_n]- H^+_m|^2d\r$ and the  function $g_n$ given by
      \begin{eqnarray}\label{eq:newton-adjoint}
        g_n=\f{1}{\gamma_n}\na\cdot\left(p_n\mathcal{L}H^+[\gamma_n]\right)+i\om\mu_0H^+[\gamma_n]p_n
      \end{eqnarray}
      by solving the following adjoint problem for $p_n$:
      \begin{equation*}
        \Delta p_n + G[\gamma_n]\cdot\na p_n -(i\om\mu_0\gamma_n+\Delta\log\gamma_n) p_n = \overline{H^+[\gamma_n]-H^+_m} \quad \mbox{in } \Omega
      \end{equation*}
      with $p_n=0$ on $\partial \Omega$.
  \item[\it Step 6.] Update $\gamma_n$:
      \begin{equation}\label{eq:newton}
        \gamma_{n+1} = \gamma_n-\f{J[\gamma_n]}{||g_n||_2^2}\overline{g_n}
      \end{equation}
      from $J[\gamma_n]$ and $g_n$ from  {\it Step 5}.
  \item[\it Step 7.] For a given tolerance $\varepsilon_2$, repeat from {\it Step 4} to {\it Step 6} until $||\gamma_n-\gamma^\ast||\leq\varepsilon_2$.
\end{enumerate}


\section{Numerical simulations}
In this section, we will present numerical simulation results from two models to validate the proposed algorithm. In the first model, we set the domain $\Om$ to be a cylindrical model where the admittivity distribution does not change along the $z$-direction. Figure \ref{fig:model1} shows the simulation model, the conductivity values $\sigma$, and the relative permittivity values $\epsilon/\tilde{\epsilon}$ in the domain, where $\tilde{\epsilon}=8.85\times10^{-12}[F/m]$ is the permittivity of free space.
Figure \ref{fig:data} shows the real and imaginary parts of the given data, $H^+[\gamma^\ast]$ in slice $\Om_0=\Om\cap\{z=0\}$, where $\gamma^\ast$ is the true admittivity distribution.
\begin{figure}
  \includegraphics[width=1.0\textwidth]{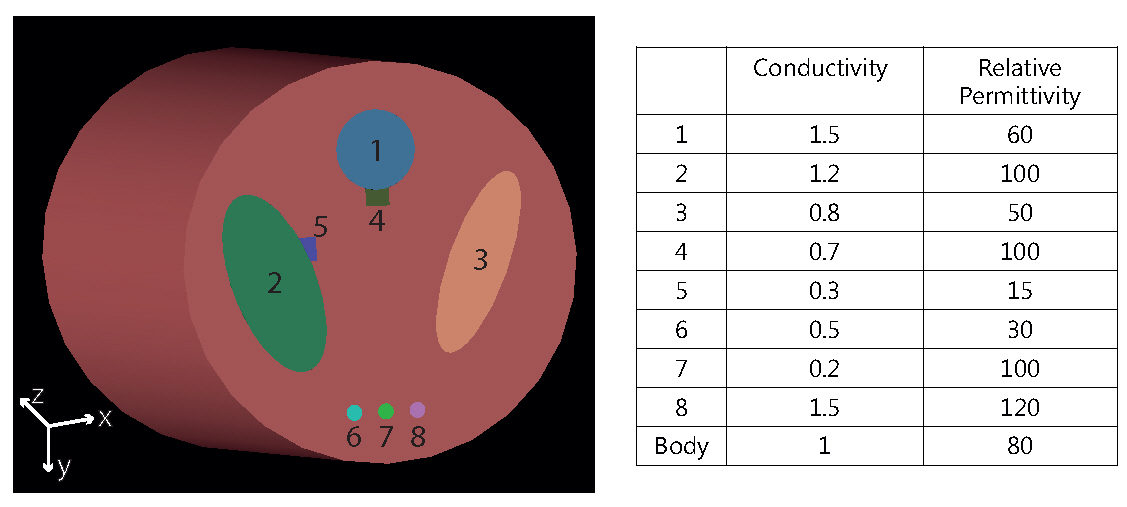}
  \caption{model configuration (left) and table of the value of electrical property (right).}
  \label{fig:model1}
\end{figure}
\begin{figure}
  \begin{tabular}{cc}
    \includegraphics[width=0.45\textwidth]{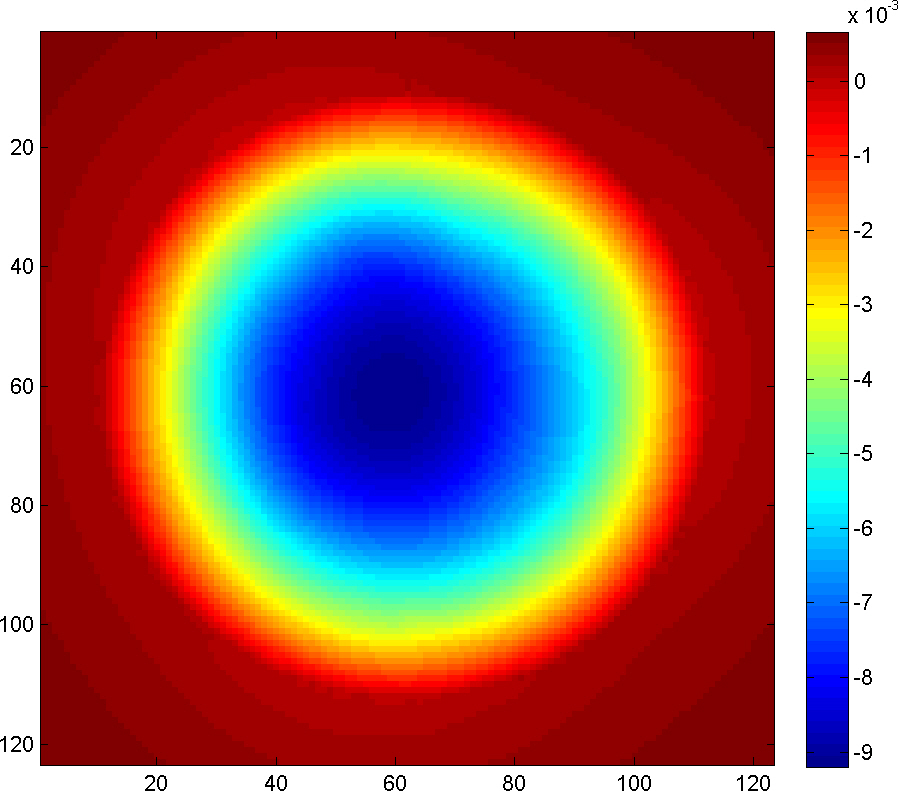}
    &
    \includegraphics[width=0.45\textwidth]{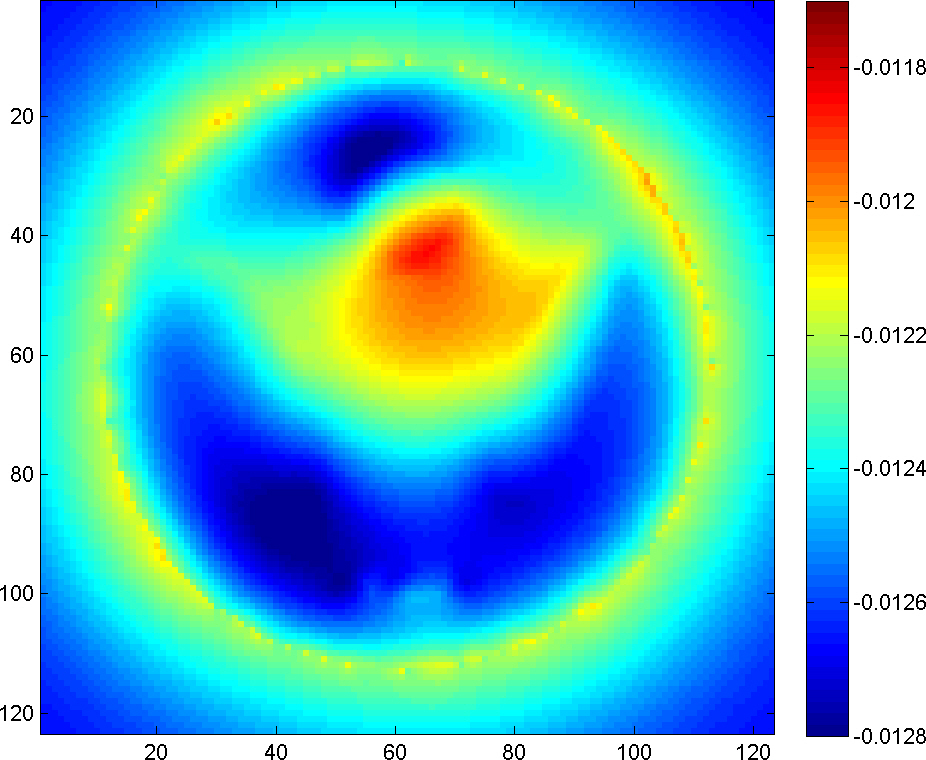}
    \\
    (a) $\Re\left(H^+[\gamma^\ast]\right)$ & (b) $\Im\left(H^+[\gamma^\ast]\right)$
  \end{tabular}
  \caption{Real part and imaginary part of the given data in the slice $\Om_0$; (a) The real part of the data $H^+[\gamma^\ast]$, (b) The imaginary part of the data $H^+[\gamma^\ast]$.}
  \label{fig:data}
\end{figure}

In subsection \ref{method1}, we proved that the solution of \eref{eq:poisson-sigma-epsilon} is the blurred approximation of true admittivity. However,  \eref{eq:poisson-sigma-epsilon} is degenerate since the diffusion matrix $A[H^+]$ is singular. So, we modified \eref{eq:poisson-sigma-epsilon} to \eref{eq:newton-poisson-U} by adding the regularization term $\rho\e_3^T\e_3$. Figure \ref{fig:degenerate} shows the determinant of $A$ and $A+\rho\e_3^T\e_3$, where $\rho$ is $5\%$ of the maximum of $A_{33}$, $P_z^2+Q_z^2$ in \eref{eq:poisson-sigma-epsilon}.
\begin{figure}
  \begin{tabular}{cc}
    \includegraphics[width=0.45\textwidth]{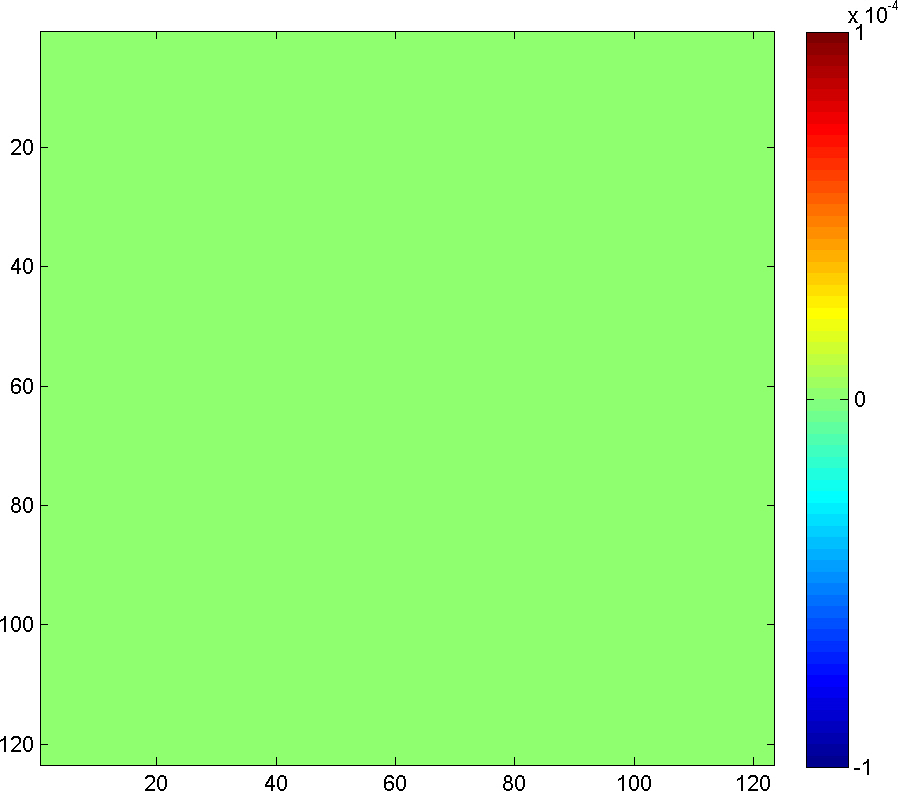}
    &
    \includegraphics[width=0.45\textwidth]{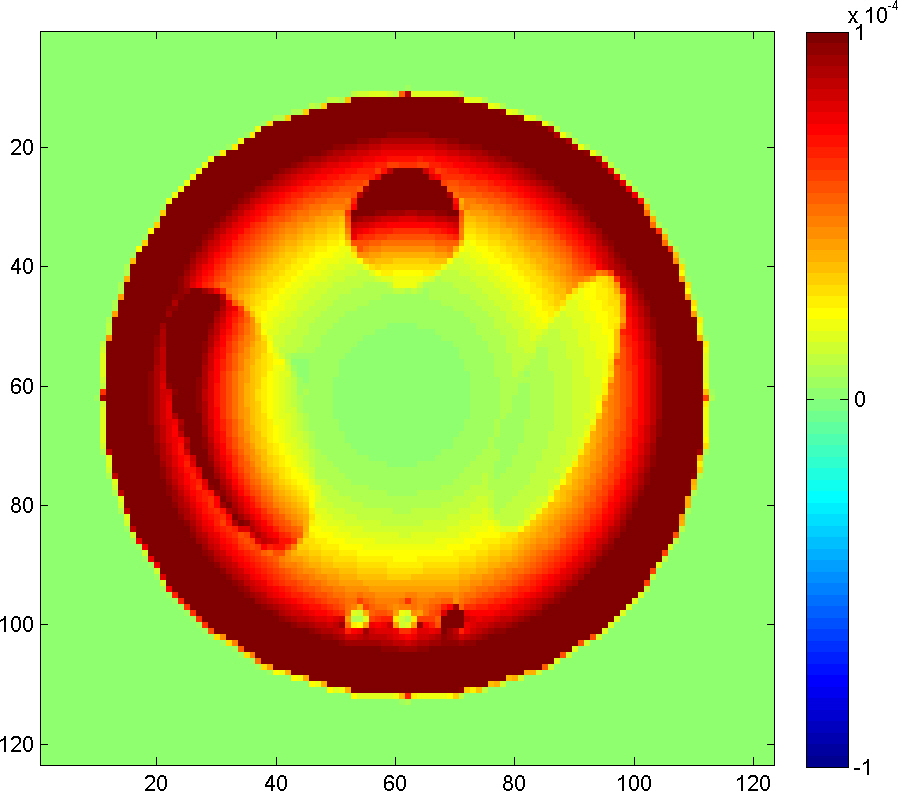}
    \\
    (a) $\mbox{det}\left(A[H^+]\right)$ & (b) $\mbox{det}\left(A[H^+]+\rho\e_3^T\e_3\right)$
  \end{tabular}
  \caption{(a) Image of the determinant of the matrix $A[H^+]$ in the slice $\Om_0$. (b) Image of the determinant of the matrix $A[H^+]+\rho\e_3^T\e_3$ in the slice $\Om_0$, where $\rho$ is $5\%$ of the maximum of $A_{33}$, $P_z^2+Q_z^2$ in \eref{eq:poisson-sigma-epsilon}.}
  \label{fig:degenerate}
\end{figure}

Figure \ref{fig:degenerate} explains that the regularized semi-elliptic PDE is also degenerate near $l=\{(0,0,z) ~|~ z\in\mathbb{R}\}$. To avoid this, we segmented subdomain $D$ near $l$, as shown in Figure \ref{fig:sElliptic}. In the subdomain $\Om\backslash D$, we applied the iteration method \eref{eq:newton-poisson-U}. Figure \ref{fig:sElliptic} illustrates solutions of  \eref{eq:newton-poisson-U}, $U_k=(\sigma_k, \om\ep_k)$, in $\Om\backslash D$ with various iteration numbers $k$. We set the initial values to be constant: $\sigma_0=1$ and $\om\ep_0=0$.
In order to check the convergence and the accuracy of the proposed algorithm \eref{eq:newton-poisson-U}, we plotted $||\gamma_k-\gamma^\ast||_2$ and $||\gamma_k-\gamma_{k-1}||_2$ with $k=1, 2, \cdots, 10$ in Figure \ref{fig:sElliptic_plot}.
\begin{figure}
  \begin{tabular}{ccccc}
    \begin{minipage}{0.8cm}$$\sigma$$\end{minipage}
    &
    \begin{minipage}{2.4cm}\includegraphics[width=2.4cm]{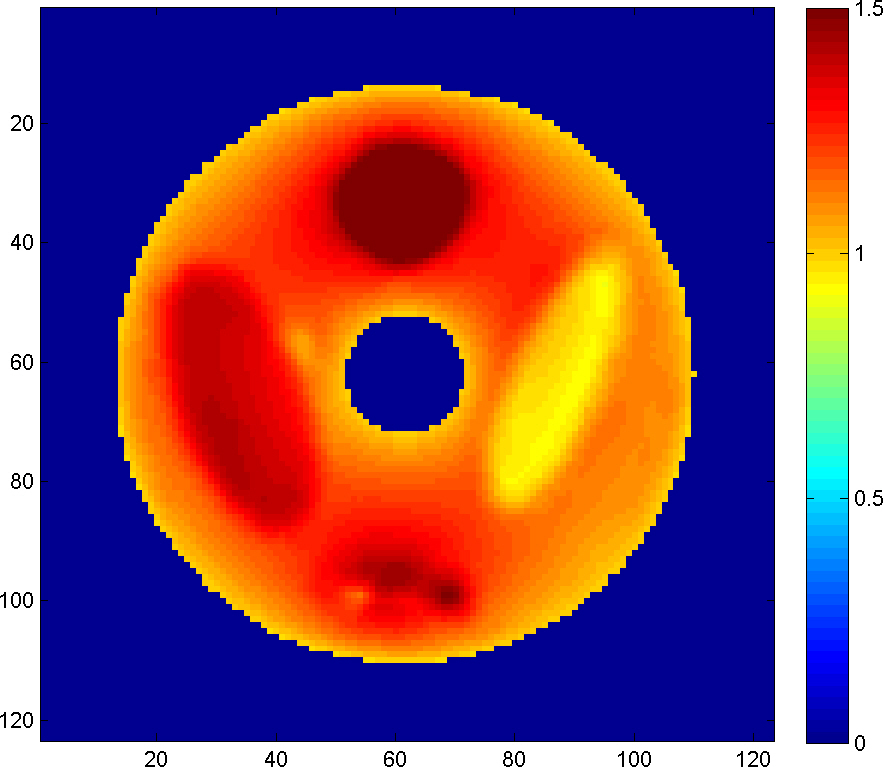}\end{minipage}
    &
    \begin{minipage}{2.4cm}\includegraphics[width=2.4cm]{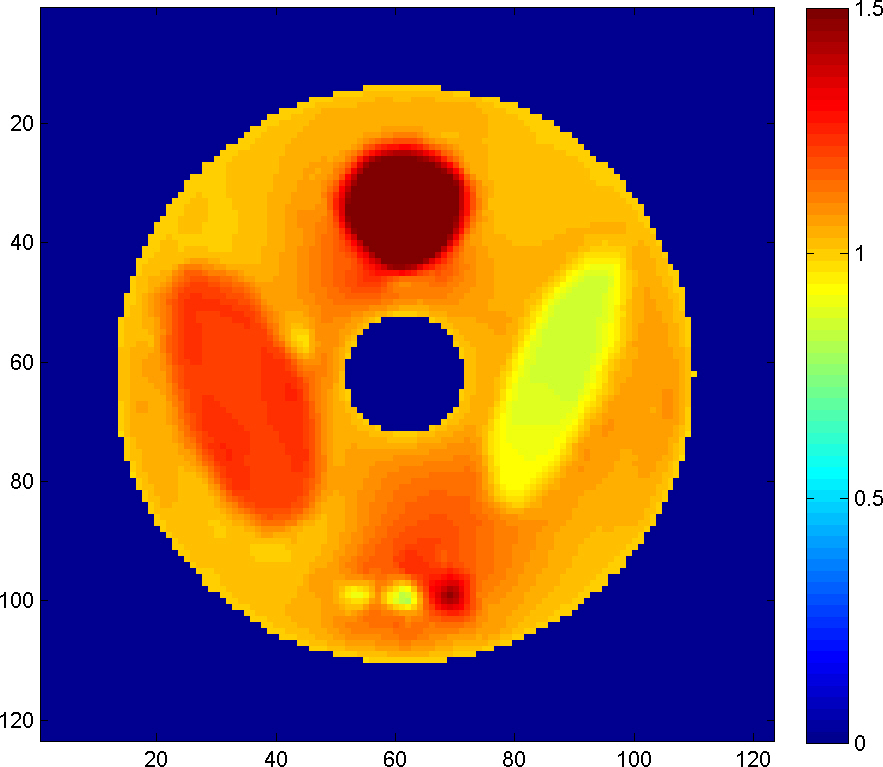}\end{minipage}
    &
    \begin{minipage}{2.4cm}\includegraphics[width=2.4cm]{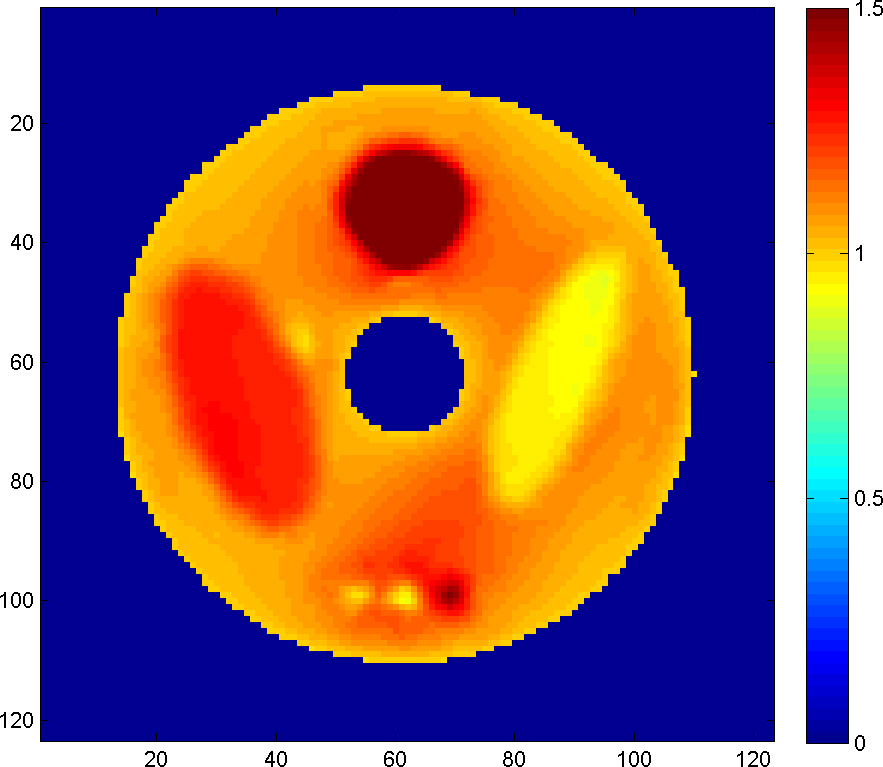}\end{minipage}
    &
    \begin{minipage}{2.4cm}\includegraphics[width=2.4cm]{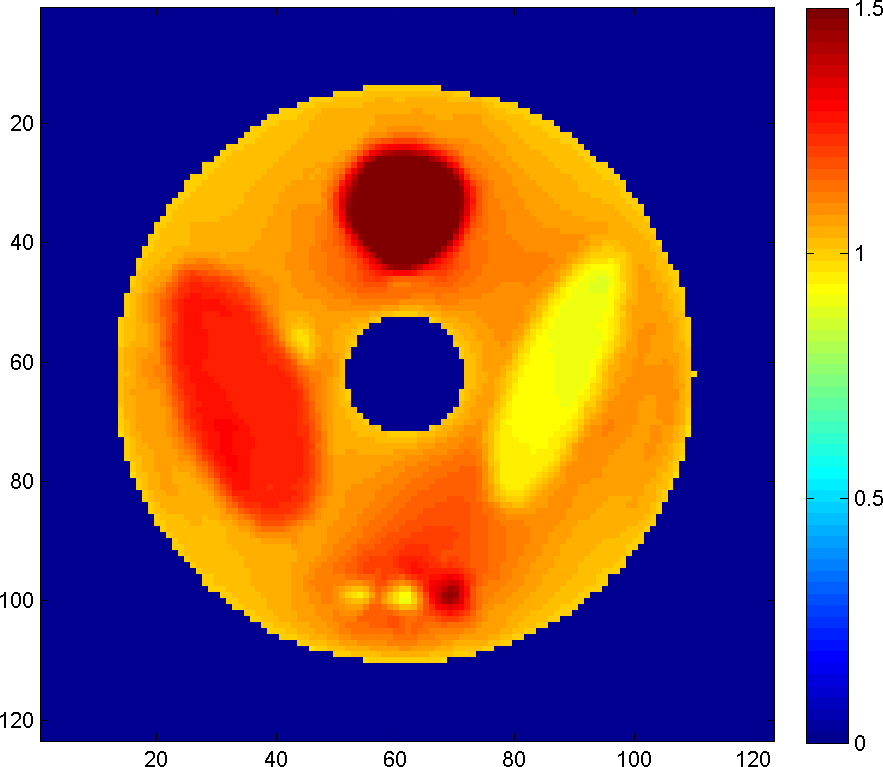}\end{minipage}
    \\
    ~ & (a) $k=1$ & (b) $k=3$ & (c) $k=5$ & (d) $k=10$
    \\
    ~
    \\
    \begin{minipage}{0.8cm}$$\epsilon/\epsilon_0$$\end{minipage}
    &
    \begin{minipage}{2.4cm}\includegraphics[width=2.4cm]{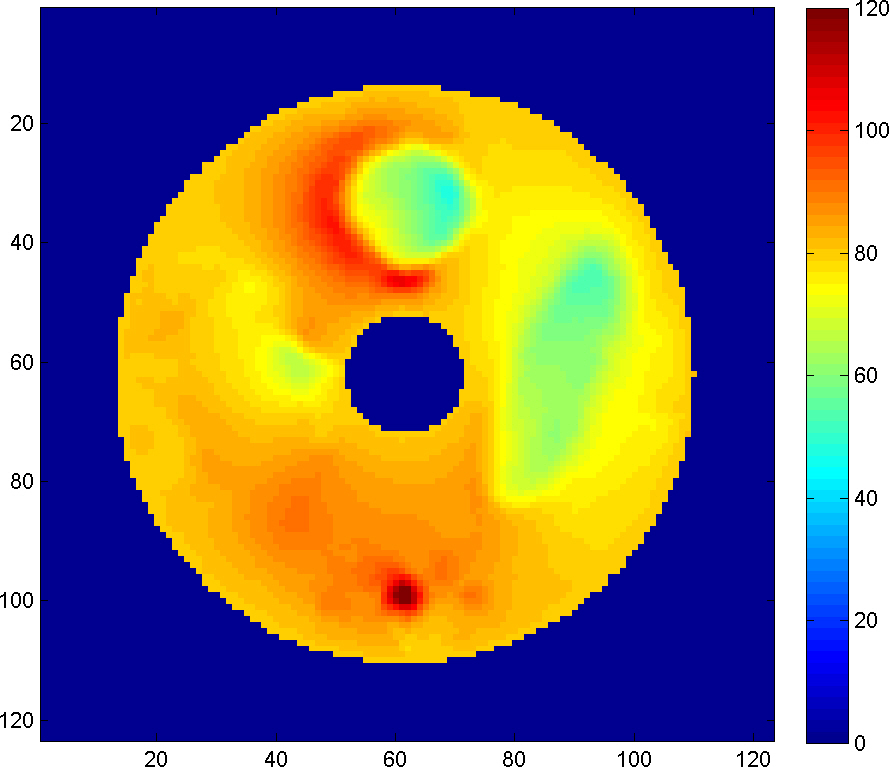}\end{minipage}
    &
    \begin{minipage}{2.4cm}\includegraphics[width=2.4cm]{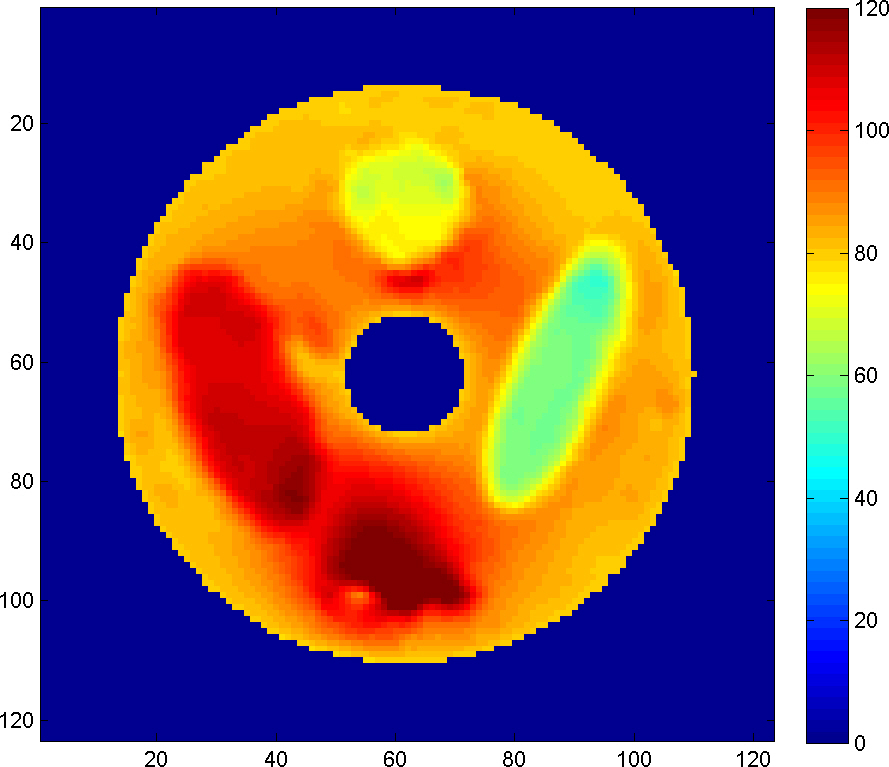}\end{minipage}
    &
    \begin{minipage}{2.4cm}\includegraphics[width=2.4cm]{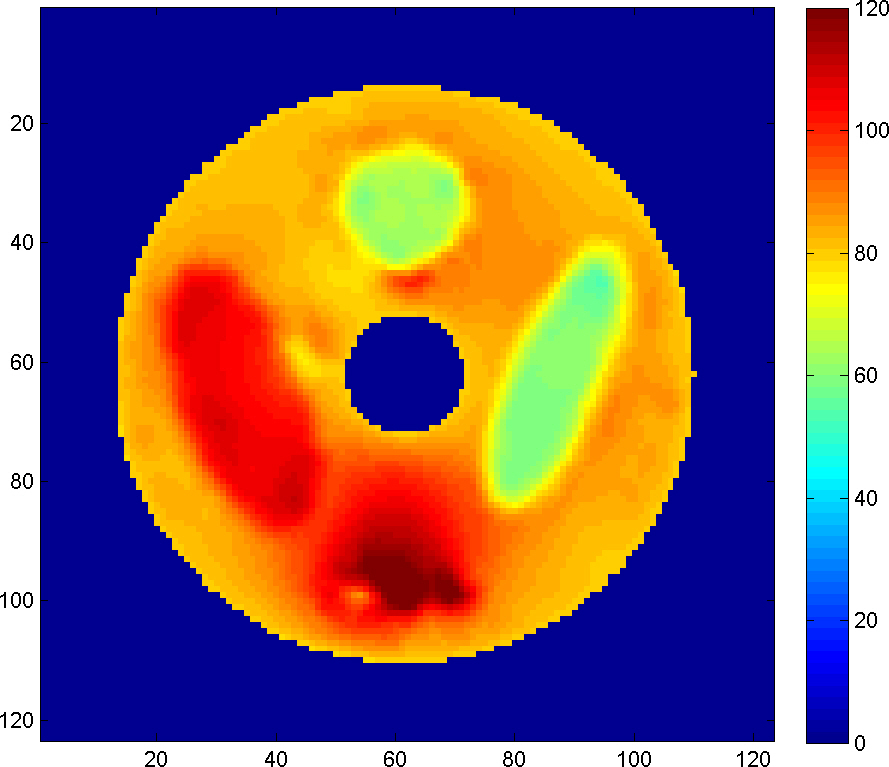}\end{minipage}
    &
    \begin{minipage}{2.4cm}\includegraphics[width=2.4cm]{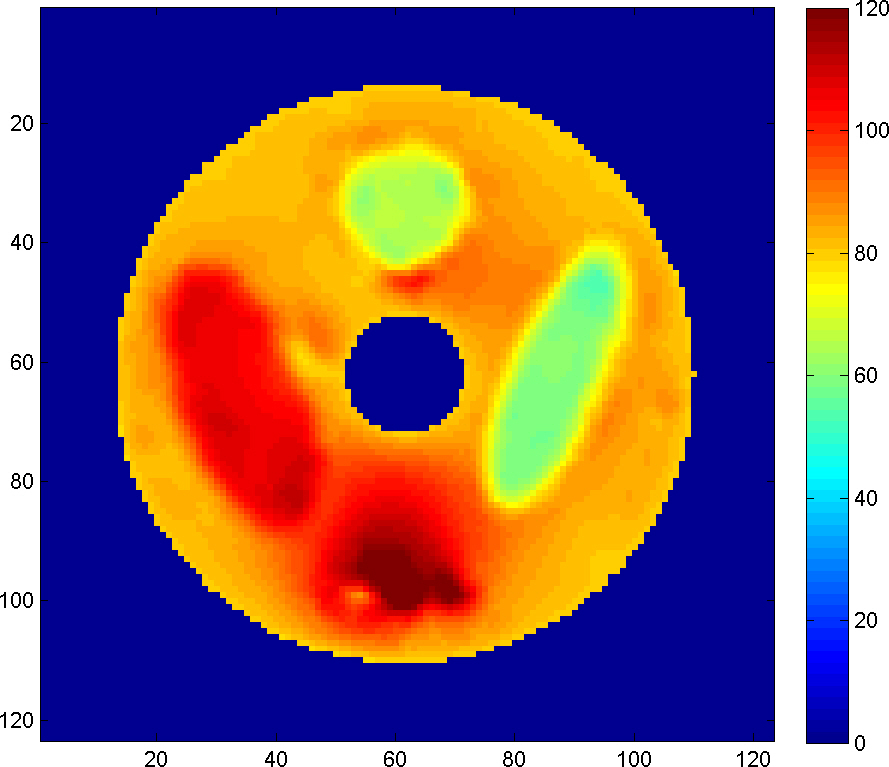}\end{minipage}
    \\
    ~ & (e) $k=1$ & (f) $k=3$ & (g) $k=5$ & (h) $k=10$
  \end{tabular}
  \caption{Reconstruction images obtained from the iterative scheme \eref{eq:newton-poisson-U} with $k=1, 2, 5, 10$ in the slice $\Om_0$. (a)-(d): Images of the first row are reconstructed conductivity distribution. (e)-(h): Images of the second row are reconstructed relative permittivity distribution.}
  \label{fig:sElliptic}
\end{figure}
\begin{figure}
  \begin{tabular}{cc}
    \includegraphics[width=0.45\textwidth]{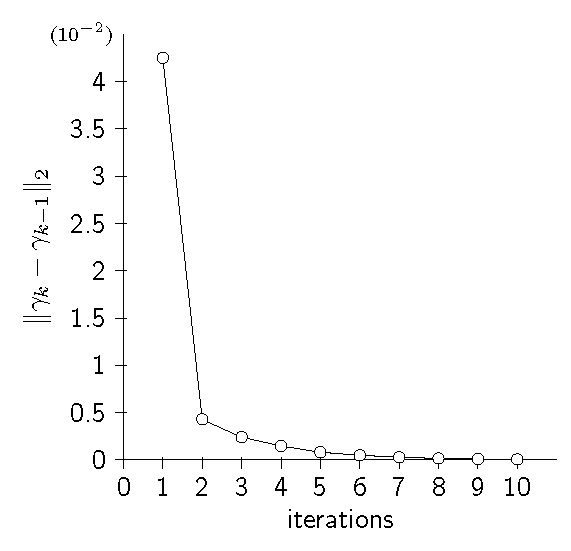}
    &
    \includegraphics[width=0.45\textwidth]{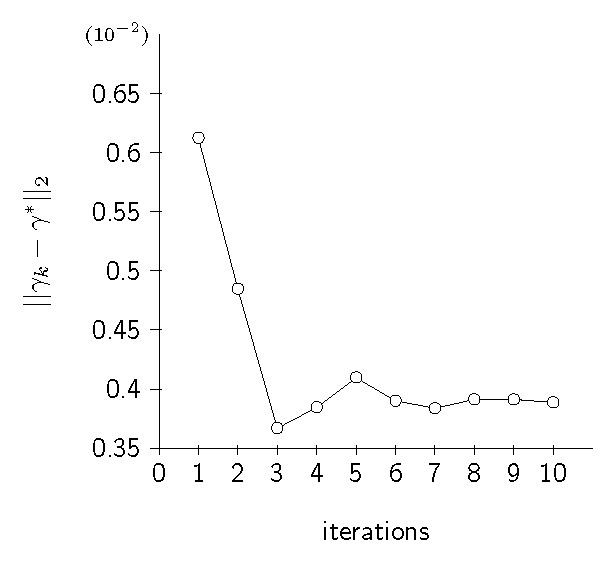}
    \\
    (a) $\|\gamma_k-\gamma_{k-1}\|_2$ & (b) $\|\gamma_k-\gamma^\ast\|_2$
  \end{tabular}
  \caption{(a) Plot $||\gamma_k-\gamma_{k-1}||_2$ to show the convergence of the iteration \eref{eq:newton-poisson-U}. (b) Plot $||\gamma_k-\gamma^\ast||_2$ to show the accuracy of \eref{eq:newton-poisson-U} with iteration numbers $k=1, 2, \cdots, 10$.}
  \label{fig:sElliptic_plot}
\end{figure}
Figure \ref{fig:sElliptic_plot} shows that the iteration method \eref{eq:newton-poisson-U} converges as $k$ increases and the error between true admittivity and reconstructed admittivity decreases. We choose $U_3$ to be the solution of the iterative algorithm.
We used direct method \eref{eq:direct-formula} for the admittivity value $\gamma$ in the segmented subdomain $D$. So, we let $U_3$ with the value obtained from the direct method in $D$ to be the initial guess of the Newton method \eref{eq:newton}.
Figure \ref{fig:newton} illustrates the reconstructed conductivity and relative permittivity distribution by the Newton iteration \eref{eq:newton}. Figure \ref{fig:newton_plot} shows the functional $J[\gamma_n]$ and the accuracy of the Newton method, $\f{1}{|S|}\int_S\left|\f{\gamma_n}{\gamma^*}-1\right|dx$, where $S$ is the region of anomalres. We defined the accuracy criterion to be $\f{1}{|S|}\int_S\left|\f{\gamma_n}{\gamma^*}-1\right|dx$ in order to see the performance of the Newton method only in the regions containing the anomalies.
\begin{figure}
  \begin{tabular}{cccc}
    \includegraphics[width=0.2\textwidth]{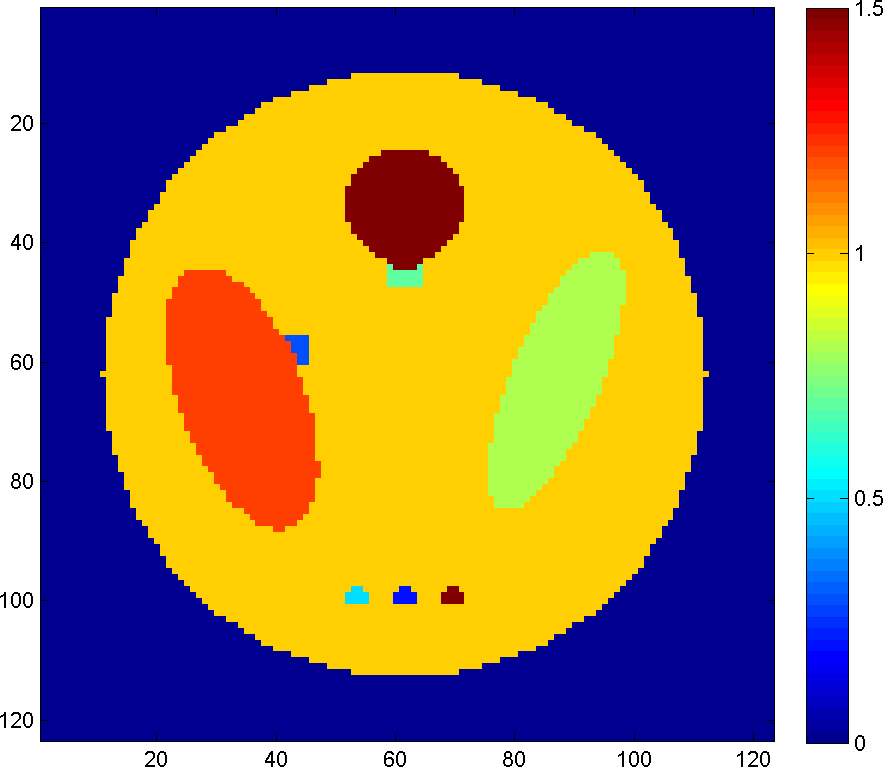}
    &
    \includegraphics[width=0.2\textwidth]{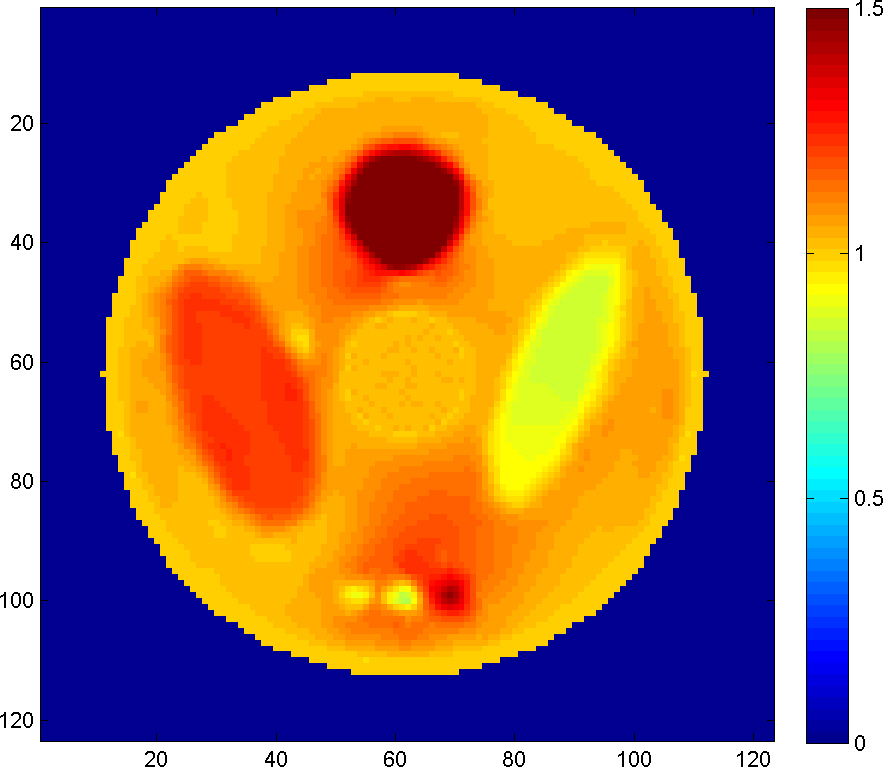}
    &
    \includegraphics[width=0.2\textwidth]{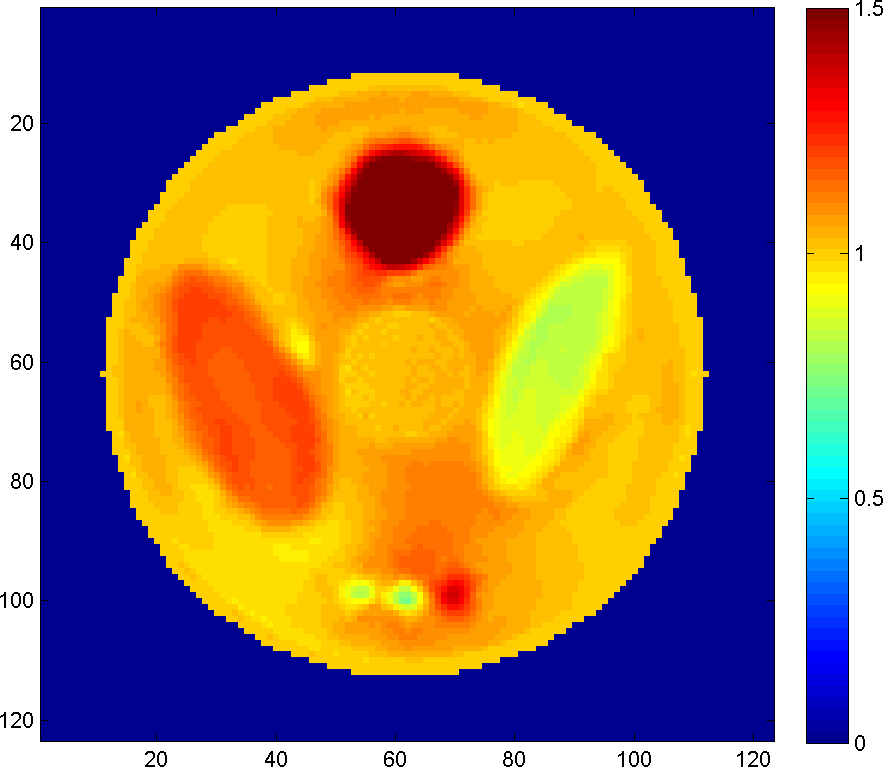}
    &
    \includegraphics[width=0.2\textwidth]{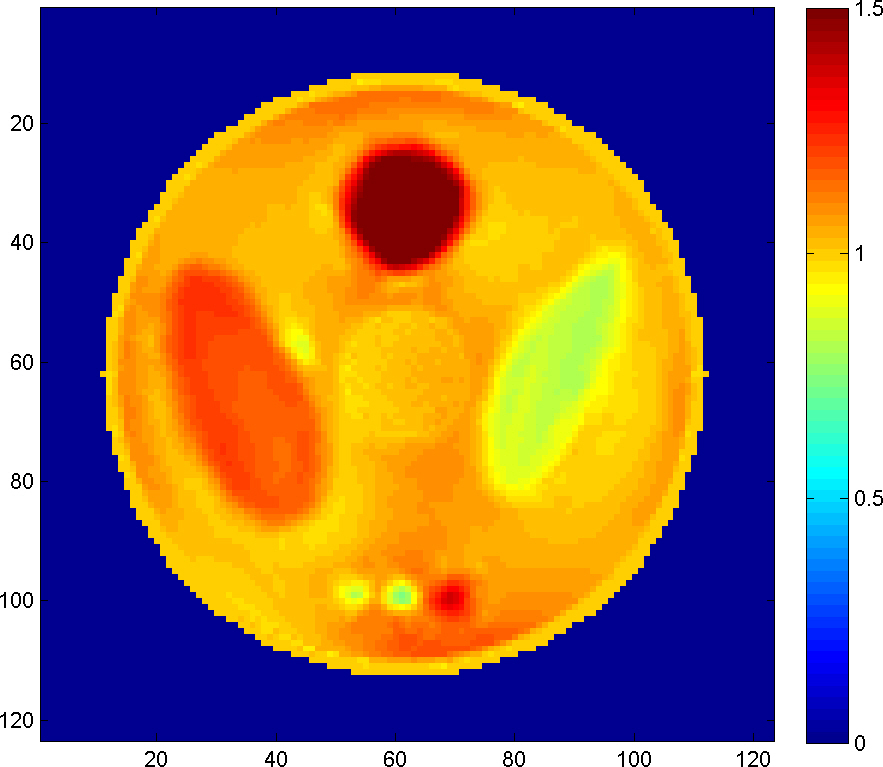}
    \\
    (a) True $\sigma$ & (b) $n=0$ & (c) $n=5$ & (d) $n=10$
    \\
    ~
    \\
    \includegraphics[width=0.2\textwidth]{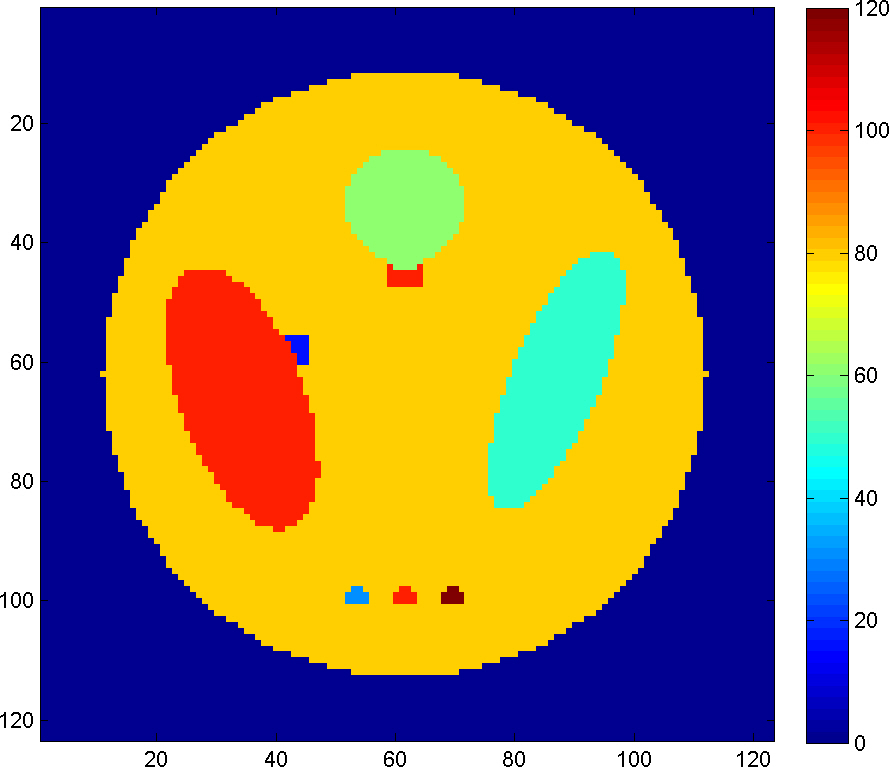}
    &
    \includegraphics[width=0.2\textwidth]{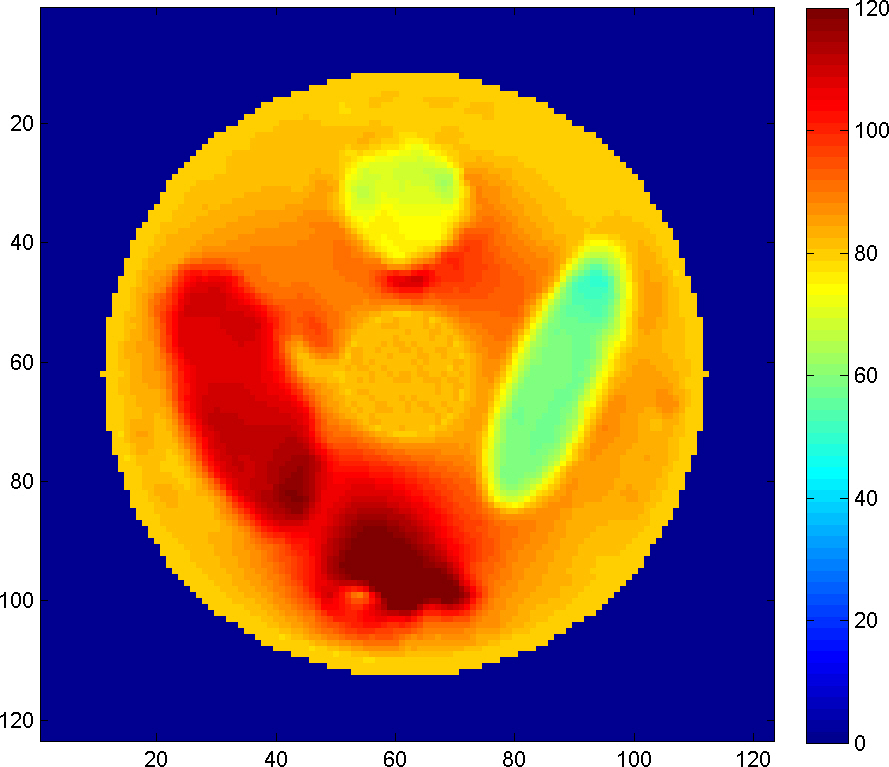}
    &
    \includegraphics[width=0.2\textwidth]{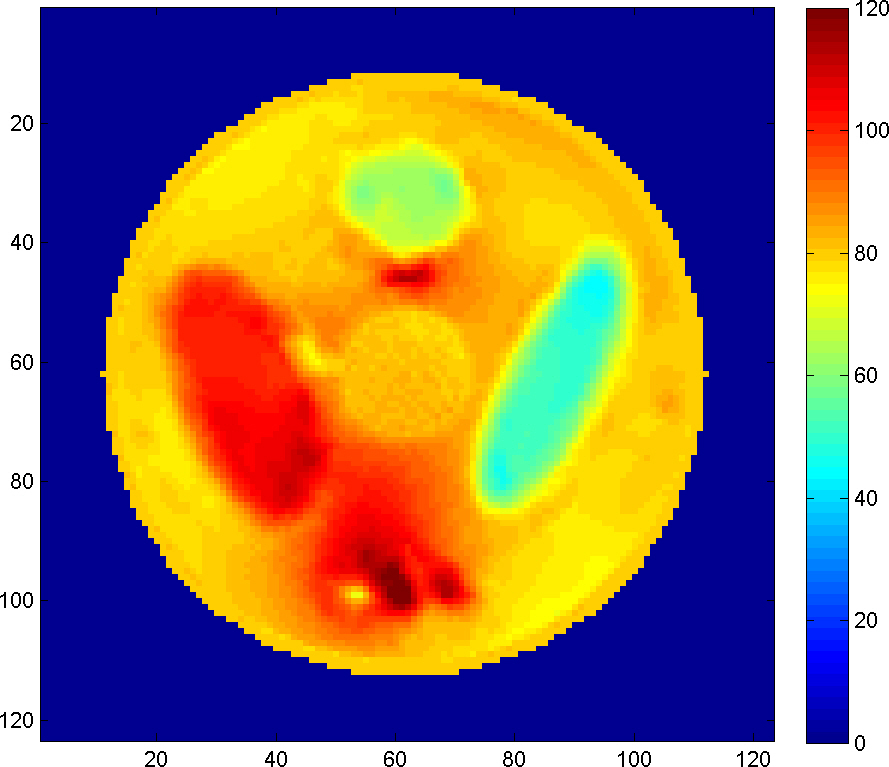}
    &
    \includegraphics[width=0.2\textwidth]{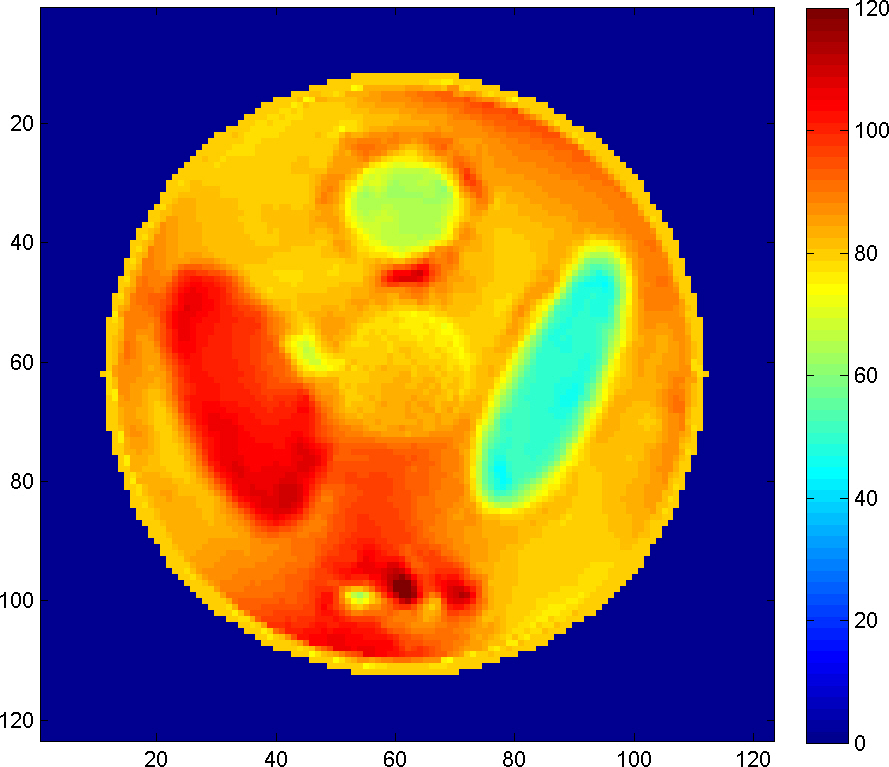}
    \\
    (e) True $\epsilon/\epsilon_0$ & (f) $n=0$ & (g) $n=5$ & (h) $n=10$
  \end{tabular}
  \caption{Reconstruction images obtained from the Newton method \eref{eq:newton} with the iteration numbers $n=1, 2, 5, 10$, in the slice $\Om_0$. (a) and (e) are true conductivity and relative permittivity images, respectively. (b), (c) and (d) are the reconstructed conductivity distributions. (f),(g), (h) are the reconstructed relative permittivity distributions.}
  \label{fig:newton}
\end{figure}
\begin{figure}
  \begin{tabular}{cc}
    \includegraphics[width=0.45\textwidth]{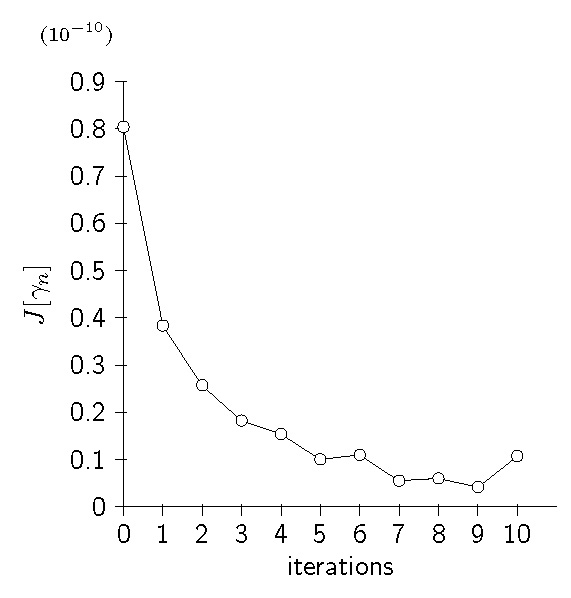}
    &
    \includegraphics[width=0.45\textwidth]{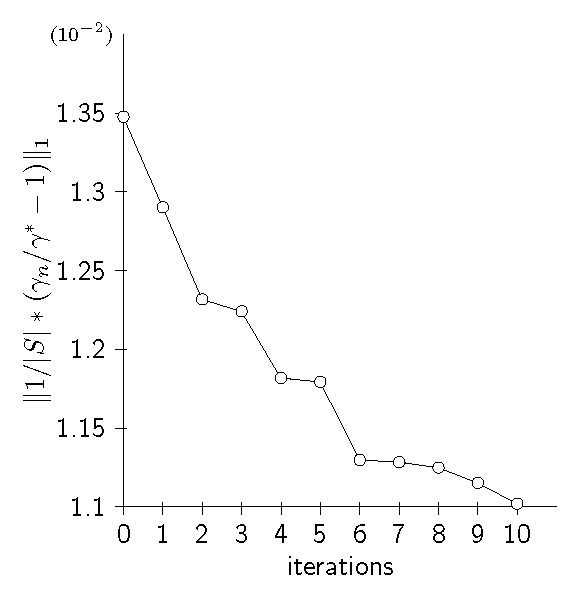}
    \\
    (a) $J[\gamma_n]$ & (b) $\f{1}{|S|}\int_S\left|\f{\gamma_n}{\gamma^*}-1\right|dx$
  \end{tabular}
  \caption{(a) Plot of the functional $J[\gamma_n]$. (b) Plot of $\f{1}{|S|}\int_S\left|\f{\gamma_n}{\gamma^*}-1\right|dx$ to show the accuracy of \eref{eq:newton-poisson-U} with the iteration numbers $n=0, 1,  \ldots, 10$.}
  \label{fig:newton_plot}
\end{figure}

To illustrate the performance of the proposed method, we compared the reconstruction results  with the true values in Figure \ref{fig:compare1}. Figure \ref{fig:compare2} shows the reconstructed images by the direct formula \eref{eq:direct-formula} and the proposed method. Figure \ref{fig:compare3} compares between the two methods, the direct formula \eref{eq:direct-formula} and the proposed method, for imaging small anomalies.
\begin{figure}
  \begin{tabular}{ccc}
    \includegraphics[width=3.8cm]{true_con}
    &
    \includegraphics[width=3.8cm]{sElliptic_con_it3}
    &
    \includegraphics[width=3.8cm]{Newton_con_it10}
    \\
    (a) & (b) & (c)
    \\
    ~
    \\
    ~
    &
    \includegraphics[width=3.8cm]{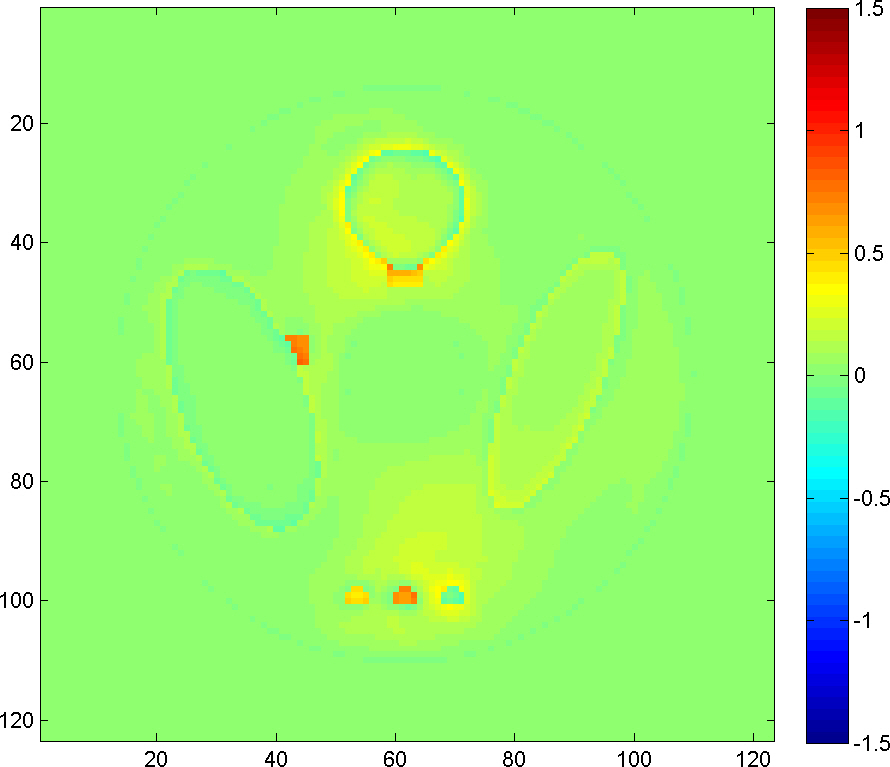}
    &
    \includegraphics[width=3.8cm]{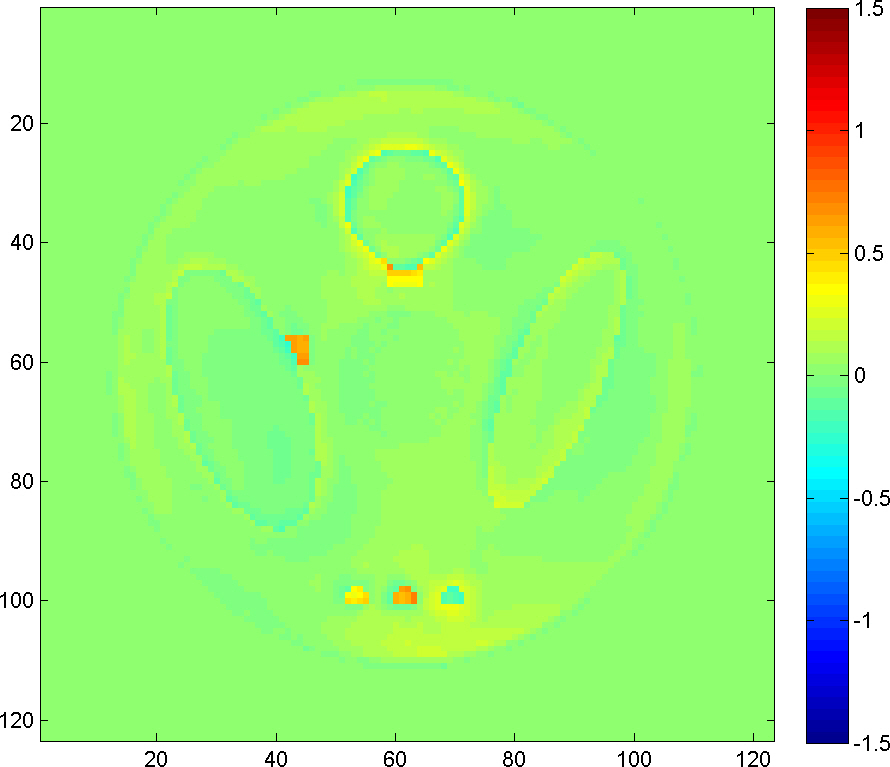}
    \\
    ~ & (d) & (e)
  \end{tabular}
  \caption{(a) True conductivity distribution in the slice $\Om_0$. (b) Reconstructed conductivity image by the semi-elliptic PDE \eref{eq:newton-poisson-U} with $k=3$ in the segmented slice $\Om_0\backslash D$. (c) Reconstructed conductivity image by the Newton iteration method \eref{eq:newton} with $n=10$ in the slice $\Om_0$. (d) The image of the error of (b) in $\Om_0\backslash D$. (e) The image of the error of (c) in $\Om_0$.}
  \label{fig:compare1}
\end{figure}
\begin{figure}
  \begin{tabular}{ccc}
    \includegraphics[width=3.8cm]{true_con}
    &
    \includegraphics[width=3.8cm]{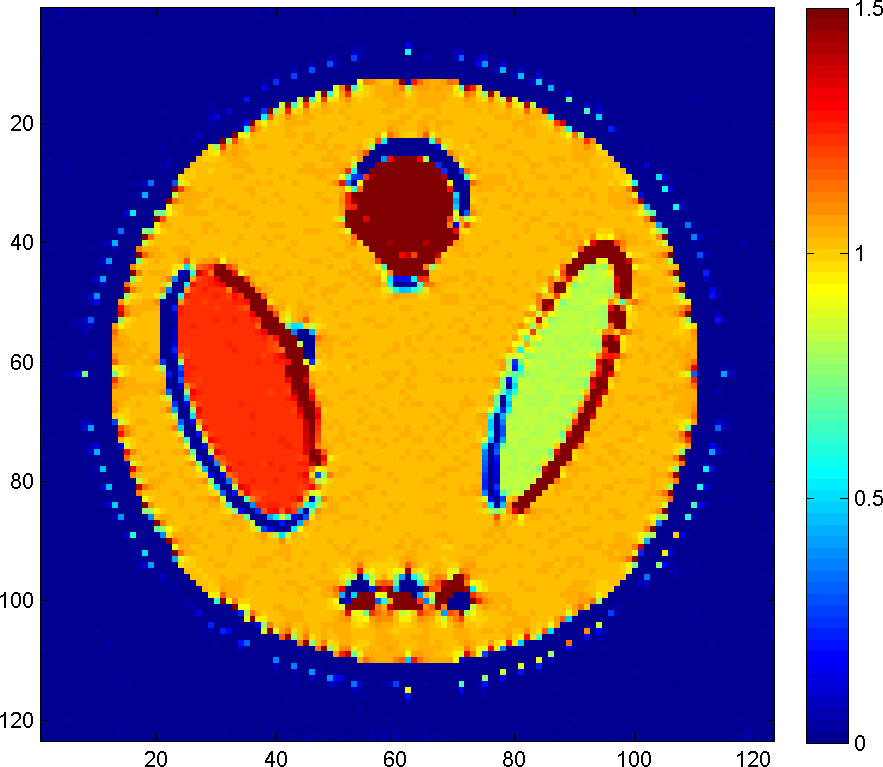}
    &
    \includegraphics[width=3.8cm]{Newton_con_it10}
    \\
    (a) & (b) & (c)
  \end{tabular}
  \caption{(a) True conductivity in the slice $\Om_0$. (b) Reconstructed conductivity obtained from the direct formula \eref{eq:direct-formula} in $\Om_0$. (c) Reconstructed conductivity using the proposed method in $\Om_0$.}
  \label{fig:compare2}
\end{figure}
\begin{figure}
  \includegraphics[width=12cm]{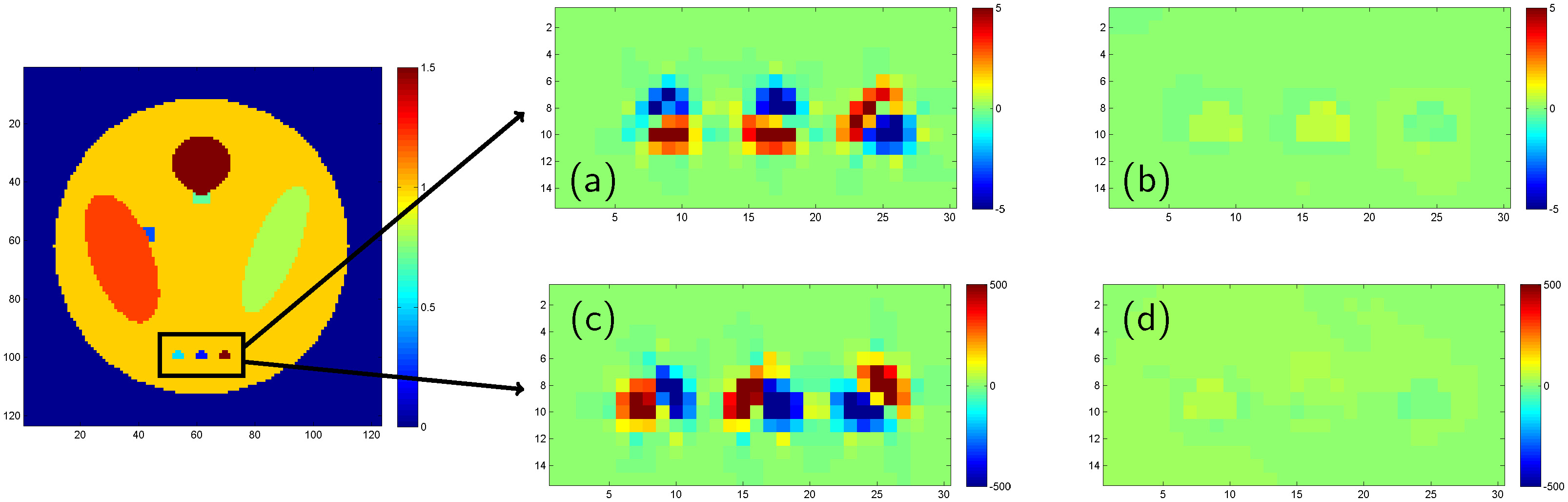}
  \caption{(a) and (c), respectively, are images of the errors of conductivity and relative permittivity using the direct formula \eref{eq:direct-formula} in the slice $\Om_0$. (b) and (d), respectively, are images of the errors of conductivity and relative permittivity using the proposed method in the slice $\Om_0$.}
  \label{fig:compare3}
\end{figure}

In the second numerical model, we simulate the model with admittivity changing along $z$-direction, i.e., $\f{\p \gamma}{\p z} \ne 0$. The domain $\Om$ is decomposed into two parts, $\Om_-=\Om\cap\{z<0\}$ and $\Om_+=\Om\cap\{z\geq0\}$. In $\Om_-$, the admittivity distribution is the same as in Model 1. However, the admittivity distribution in $\Om_+$ is different from Model 1 and is such that $\f{\p \gamma}{\p z} \ne 0$ in $\Om_0$. Figure \ref{fig:model2} shows the second configuration model, the conductivity and the relative permittivity values in the domain. Figure \ref{fig:result_model2} shows the reconstruction results  using the direct method and the proposed method. Figure \ref{fig:Err_plot_model2} shows the accuracy of the proposed method applied to the second model. Figure \ref{fig:compare1_model2} presents the reconstructed conductivity distribution of the second model in the slice of $\Om_-$ using the proposed method. Figure \ref{fig:Err_plot_model2} and Figure \ref{fig:compare1_model2} demonstrate that the proposed method works well in the case of $\f{\p \gamma}{\p z} \ne 0$.
\begin{figure}
  \includegraphics[width=1.0\textwidth]{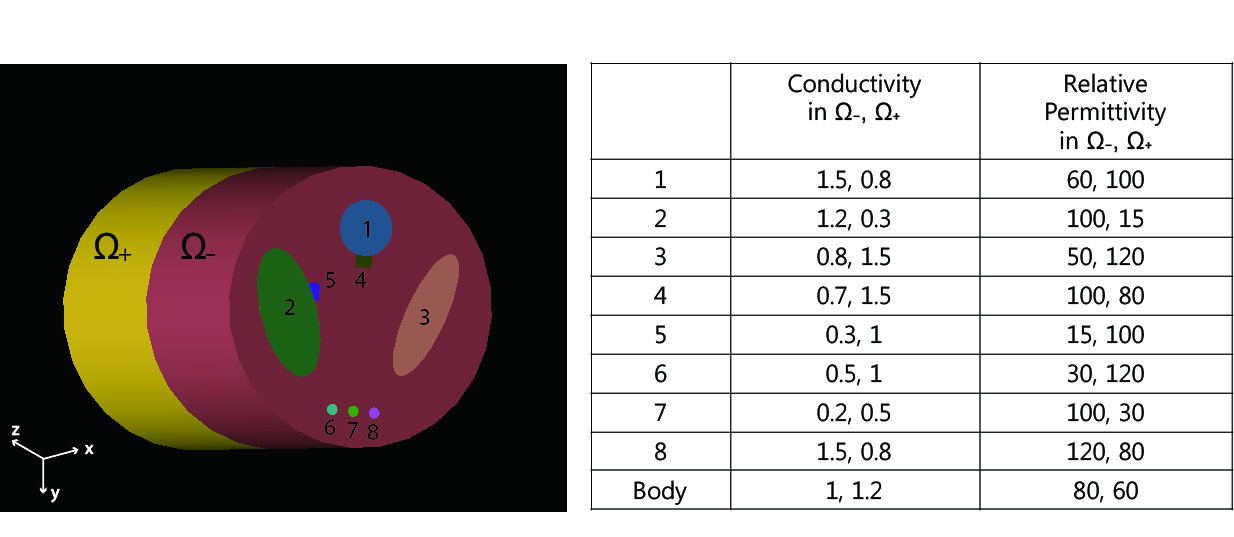}
  \caption{Second model configuration (left) and table of the value of electrical property (right).}
  \label{fig:model2}
\end{figure}
\begin{figure}
  \begin{tabular}{ccc}
    \includegraphics[width=3.8cm]{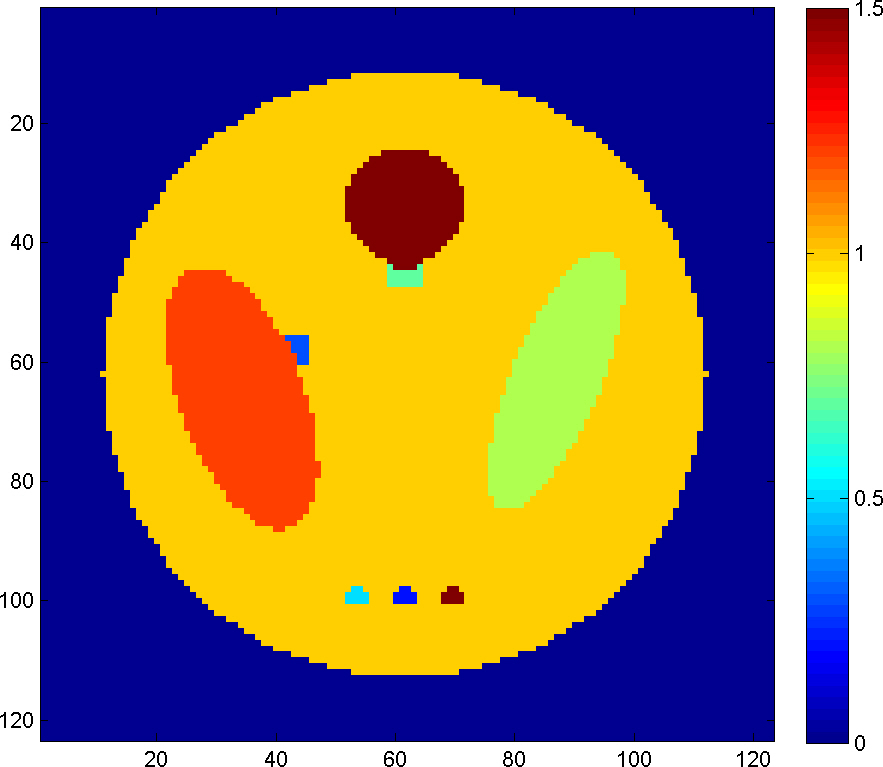} &
    \includegraphics[width=3.8cm]{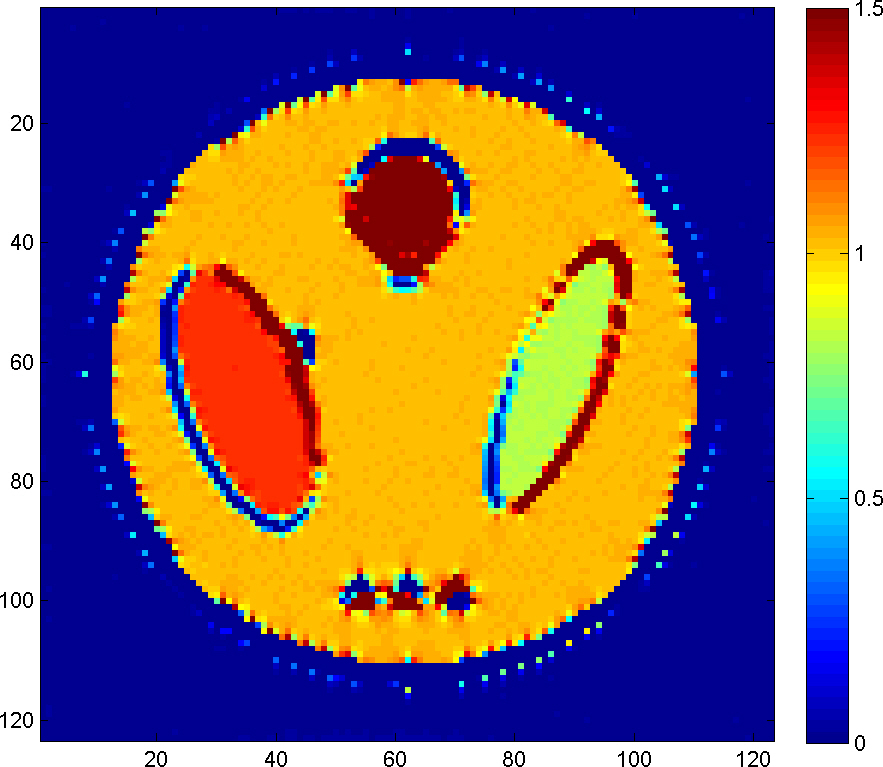} &
    \includegraphics[width=3.8cm]{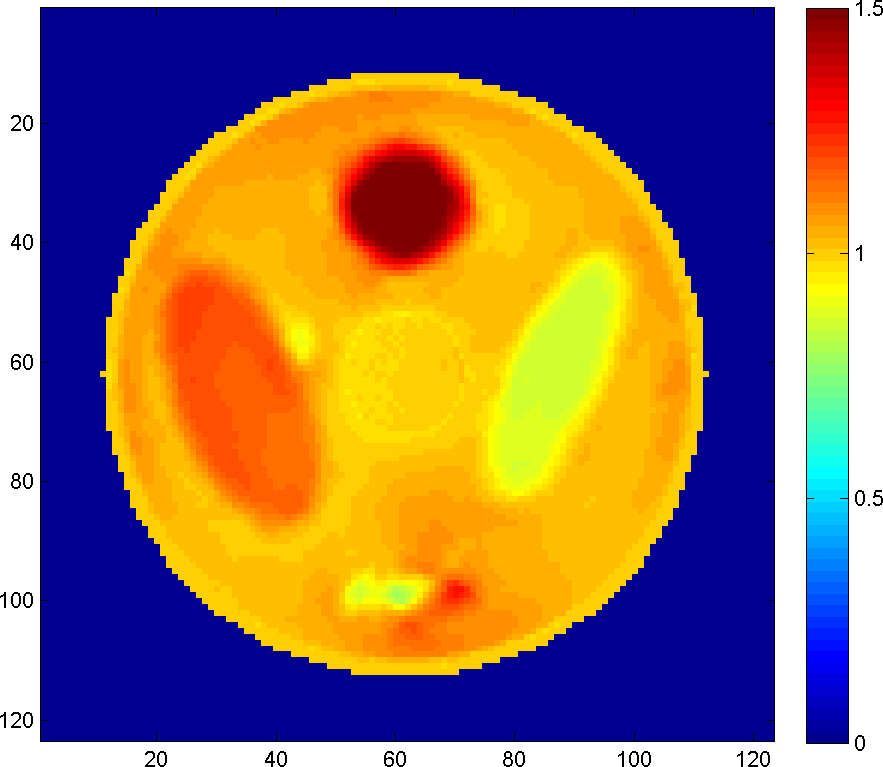} \\
    (a) & (b) & (c) \\
    \includegraphics[width=3.8cm]{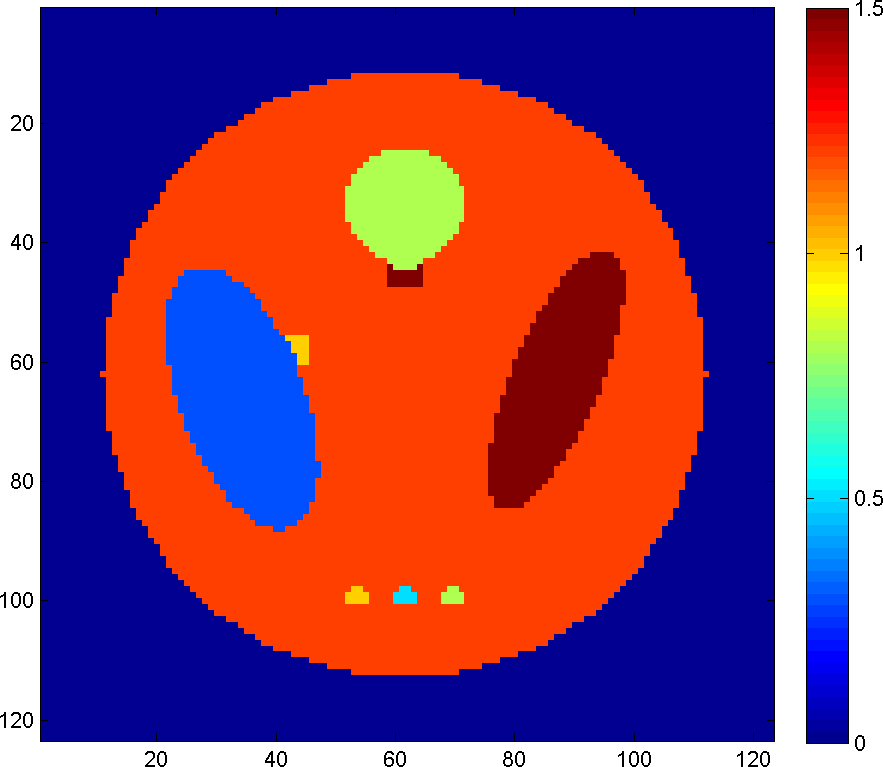} &
    \includegraphics[width=3.8cm]{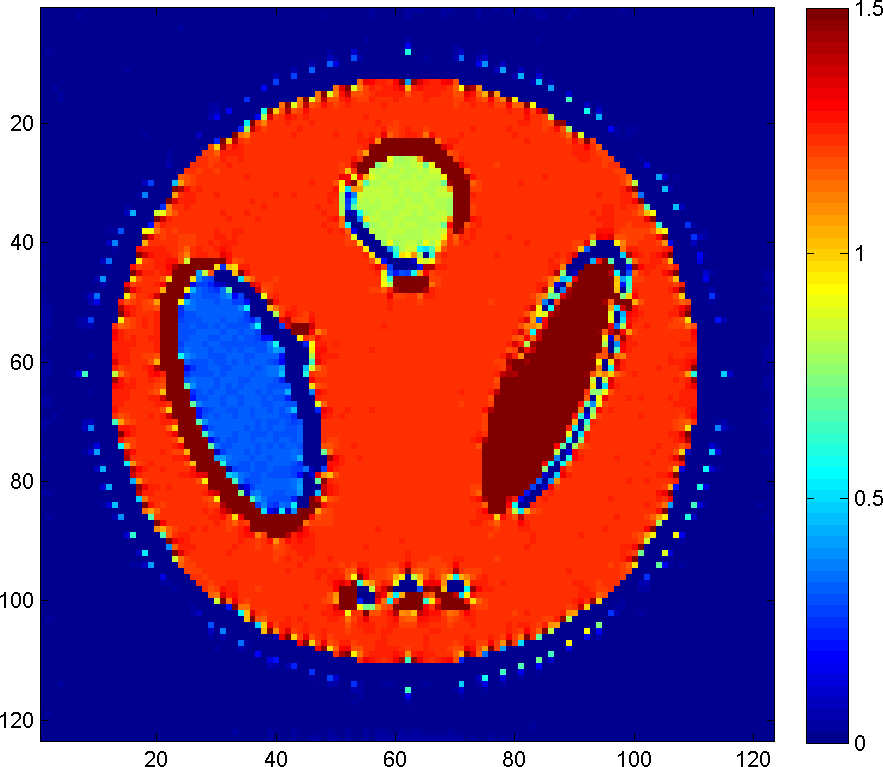} &
    \includegraphics[width=3.8cm]{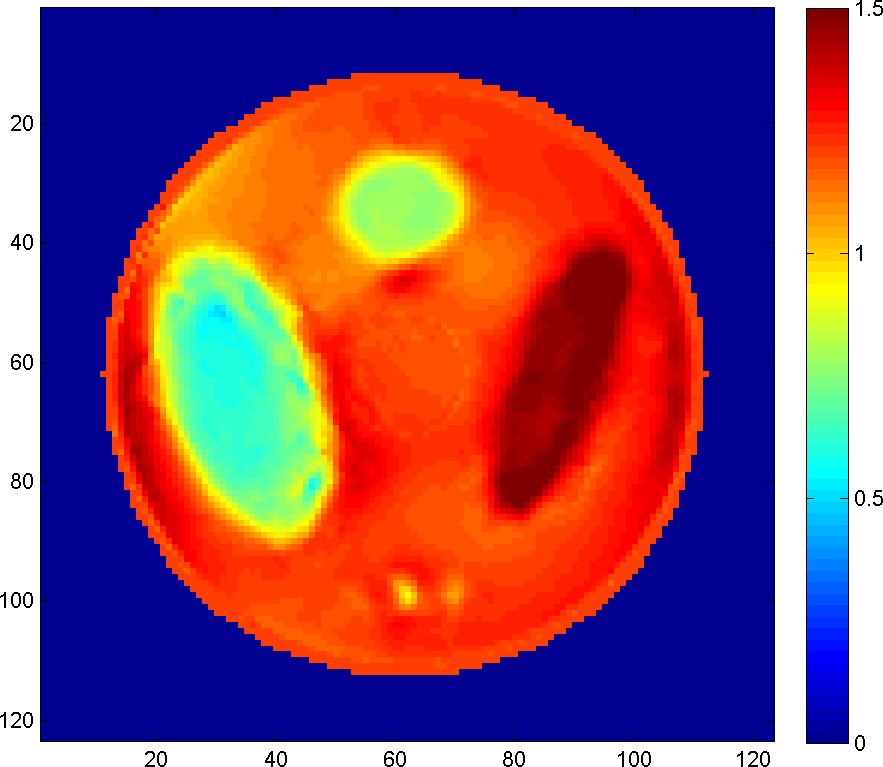} \\
    (d) & (e) & (f)
  \end{tabular}
  \caption{(a) True conductivity in the slice in $\Om_-$. (b) Reconstructed conductivity obtained from the direct formula \eref{eq:direct-formula} in $\Om_-$. (c) Reconstructed conductivity using the proposed method in $\Om_-$. (d) True conductivity in the slice in $\Om_+$. (e) Reconstructed conductivity obtained from the direct formula \eref{eq:direct-formula} in $\Om_+$. (f) Reconstructed conductivity using the proposed method in $\Om_+$.}
  \label{fig:result_model2}
\end{figure}
\begin{figure}
  \begin{tabular}{cc}
    \includegraphics[width=0.45\textwidth]{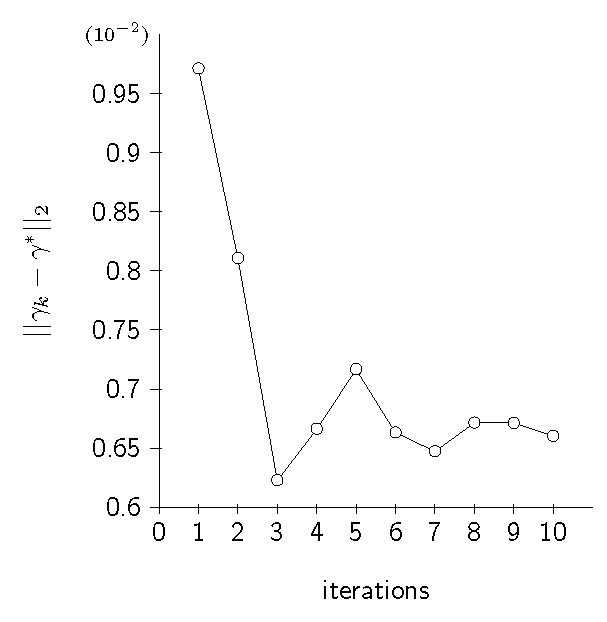}
    &
    \includegraphics[width=0.45\textwidth]{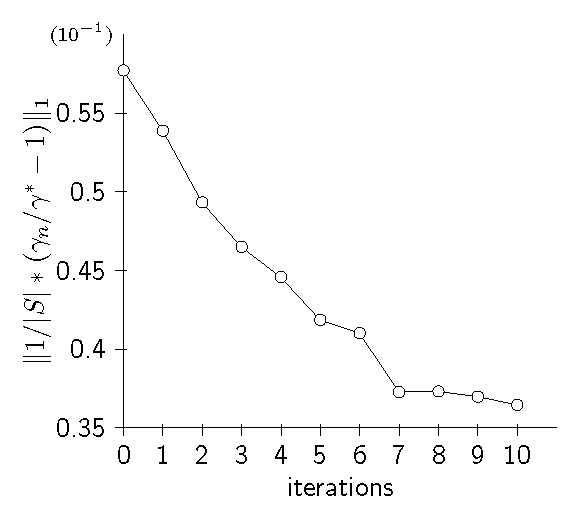}
    \\
    (a) $\|\gamma_k-\gamma^\ast\|_2$ & (b) $\f{1}{|S|}\int_S\left|\f{\gamma_n}{\gamma^*}-1\right|dx$
  \end{tabular}
  \caption{(a) Plot of $||\gamma_k-\gamma^\ast||_2$ to show the accuracy of the semi-elliptic PDE \eref{eq:newton-poisson-U} with the iteration numbers $k=1, 2, \cdots, 10$. (b) Plot of $\f{1}{|S|}\int_S\left|\f{\gamma_n}{\gamma^*}-1\right|dx$ to show the accuracy of the Newton iteration \eref{eq:newton-poisson-U} with the iteration numbers $n=0, 1,  \cdots, 10$.}
  \label{fig:Err_plot_model2}
\end{figure}
\begin{figure}
  \begin{tabular}{ccc}
    \includegraphics[width=3.8cm]{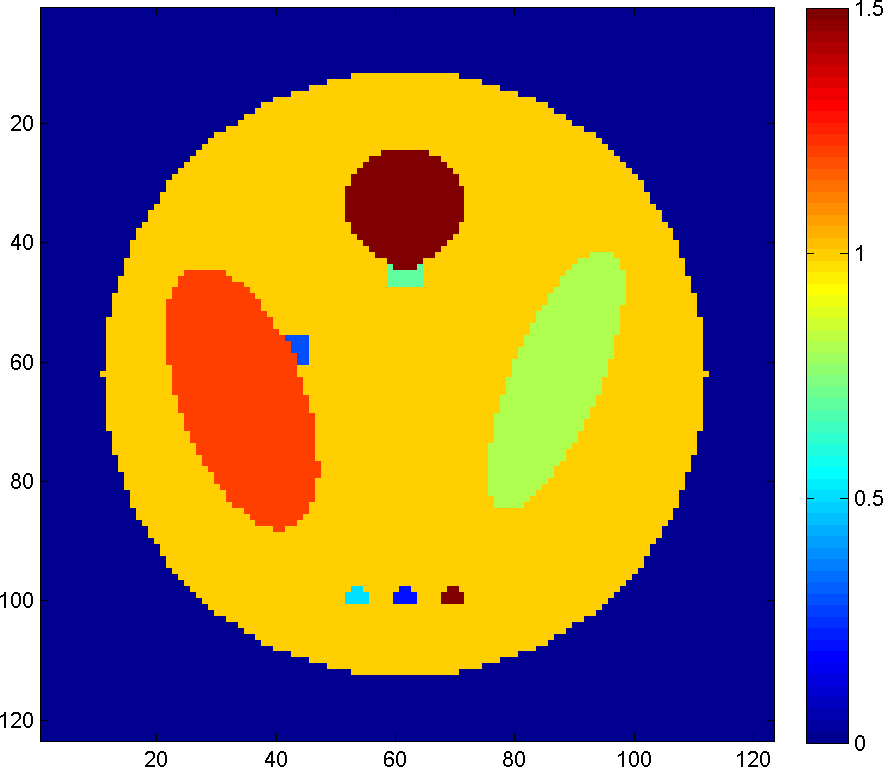}
    &
    \includegraphics[width=3.8cm]{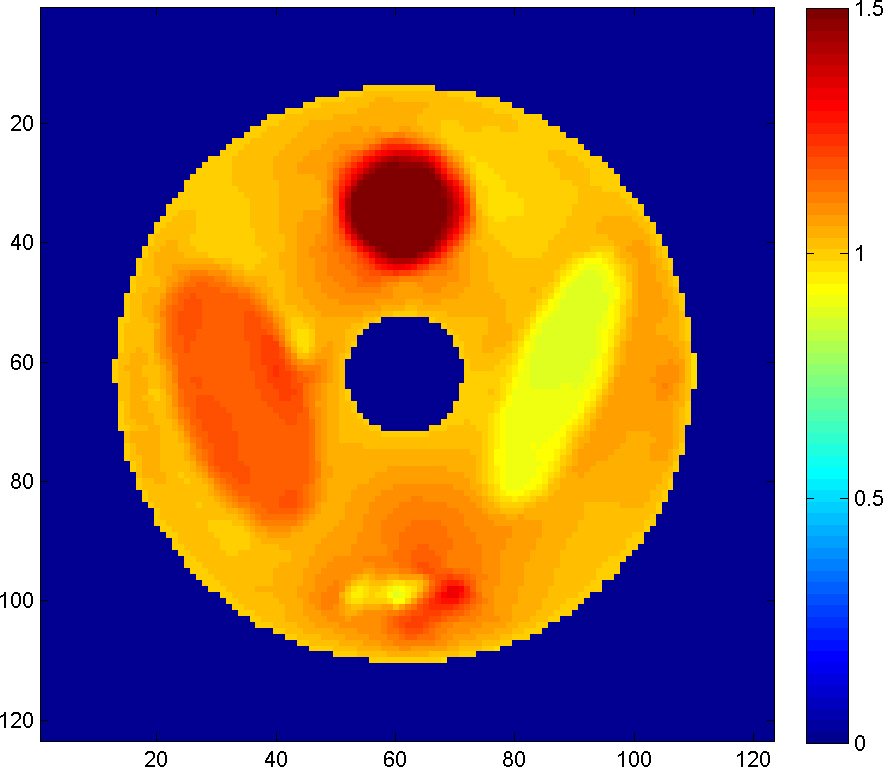}
    &
    \includegraphics[width=3.8cm]{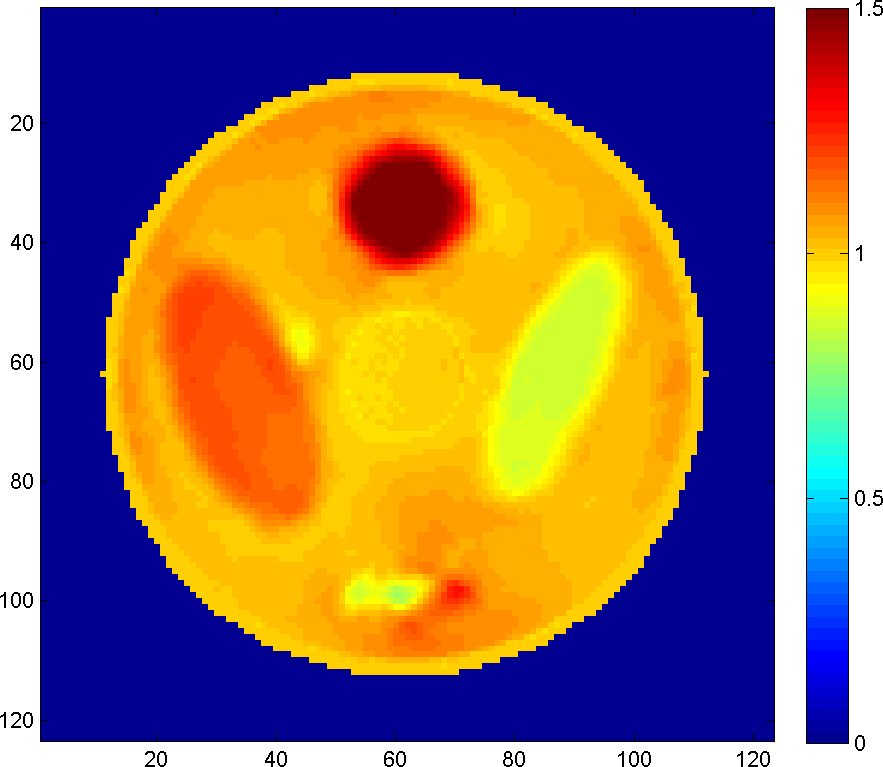}
    \\
    (a) & (b) & (c)
    \\
    ~
    \\
    ~
    &
    \includegraphics[width=3.8cm]{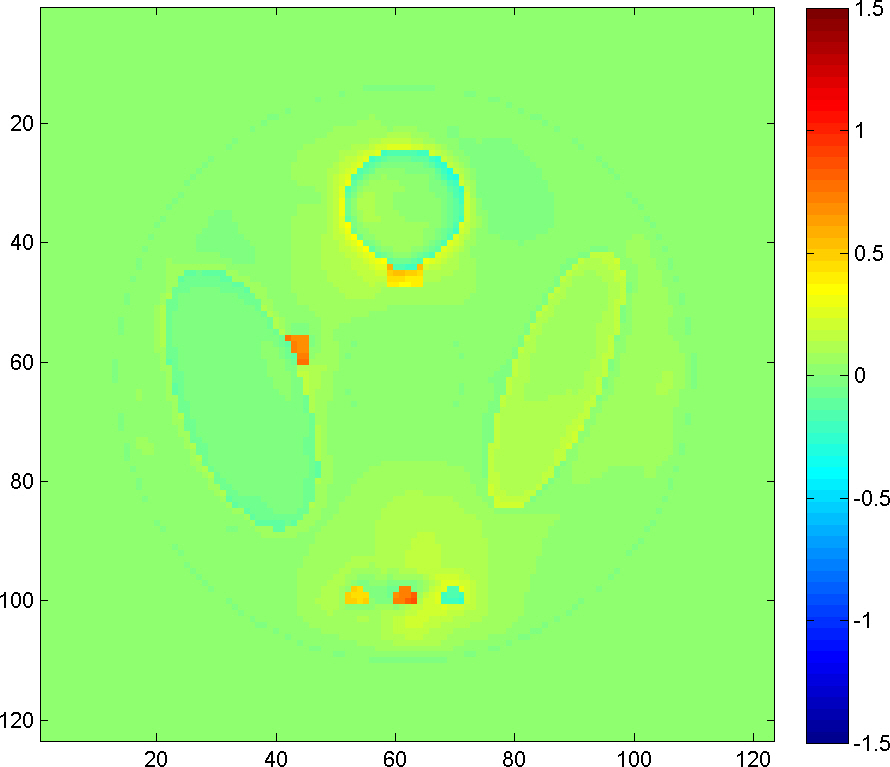}
    &
    \includegraphics[width=3.8cm]{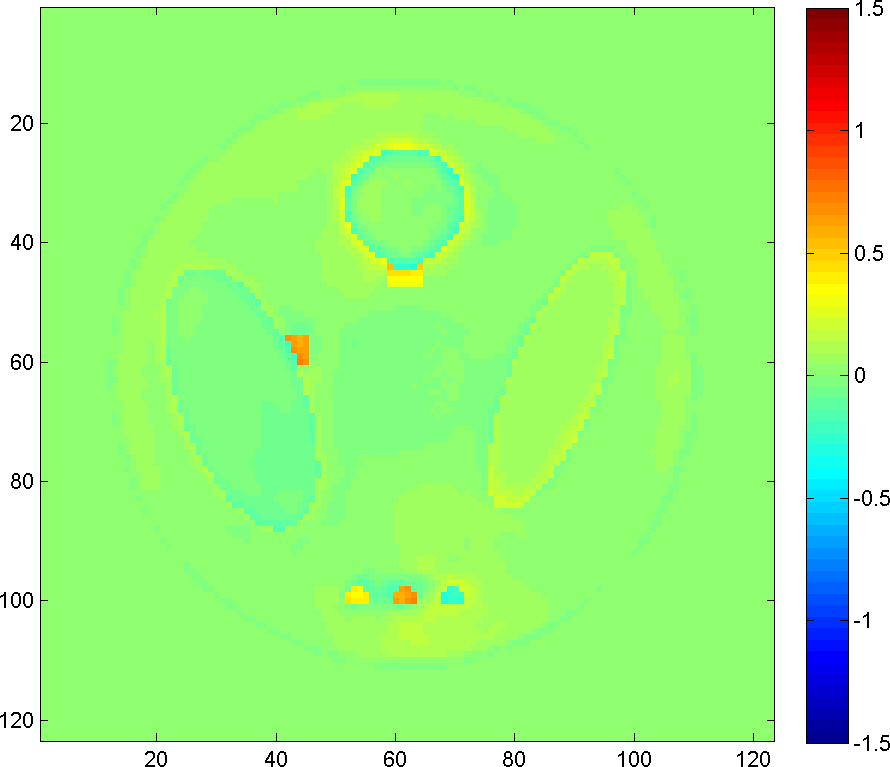}
    \\
    ~ & (d) & (e)
  \end{tabular}
  \caption{(a) True conductivity distribution in the slice of $\Om_-$. (b) Reconstructed conductivity image by the semi-elliptic PDE \eref{eq:newton-poisson-U} with $k=3$ in the segmented in the slice of $\Om_-\backslash D$. (c) Reconstructed conductivity image by the Newton iteration method \eref{eq:newton} with $n=10$ in the slice of $\Om_-$. (d) The image of the error of (b) in the slice of $\Om_-\backslash D$. (e) The image of the error of (c) in the slice of $\Om_-$.}
  \label{fig:compare1_model2}
  \end{figure}

\section{Concluding remarks}

In this paper, we have developed  an iterative novel scheme for reconstructing electrical tissue properties at the Larmor frequency from measurements of the positive rotating magnetic field. We first suggest the elliptic partial differential equation \eref{eq:poisson-U} which provides a blurred reconstructed image. By considering the blurred reconstructed image as an initial guess of the Newton iteration, the Newton iteration for finding the minimizer of the functional $J$ in \eref{minproblem} finds the final reconstruction admittivity. Note that our scheme does not require a local homogeneity assumptions on $\gamma$ and allows to reconstruct inhomogeneous distributions accurately.






\begin{thebibliography}{99}



%



\bibitem{Akoka1993}
{\sc S.~Akoka, F.~Franconi, F.~Seguin, and A.~Le~Pape}, {\em Radiofrequency map
  of an NMR coil by imaging}, Magn. Reson. Imag., 11 (1993), pp.~437--441.

\bibitem{Ammari2014} {\sc H. Ammari, J. Garnier, L. Giovangigli, W. Jing, and J.~K.~Seo}, {\em Spectroscopic imaging of a dilute cell suspension}, arXiv:1310.1292.

\bibitem{Ammari2014b} {\sc H. Ammari, L. Giovangigli, L. Nguyen, and J.~K.~Seo}, {\em Admittivity imaging from multi-frequency micro-electrical impedance tomography
}, arXiv:1403.5708.






\bibitem{Collins2002}
{\sc C.~M. Collins, Q.~X. Yang, J~H Wang, X.~Zhang, H.~Liu, S.~Michaeli, X.-H.
  Zhu, G.~Adriany, J.~T. Vaughan, P.~Anderson, H.~Merkle, K.~Ugurbil, M.~B.~Smith, and W.~Chen}, {\em Different excitation and reception distributions
  with a single-loop transmit-receive surface coil near a head-sized spherical
  phantom at 300 MHz}, Magn. Reson. Med., 47 (2002), pp.~1026--1028.






\bibitem{Gabriel1996a}
{\sc C.~Gabriel, S.~Gabriel, and E.~Corthout}, {\em The dielectric properties
  of biological tissues: I. Literature survey}, Phys. Med. Biol., 41 (1996),
  pp.~2231--2249.

\bibitem{Gabriel1996b}
{\sc S.~Gabriel, R.~W. Lau, and C.~Gabriel}, {\em The dielectric properties of
  biological tissues: II. Measurements in the frequency range 10 Hz to 20
  GHz.}, Phys. Med. Biol., 41 (1996), pp.~2251--2269.

\bibitem{Gabriel1996c}
{\sc S.~Gabriel, R.~W. Lau, and C.~Gabriel},
{\em The dielectric
  properties of biological tissues: III. Parametric models for the dielectric
  spectrum of tissues}, Phys. Med. Biol., 41 (1996), pp.~2271--2293.


\bibitem{Gilbarg1998}
{\sc D.~Gilbarg and N.~S. Trudinger}, {\em Elliptic Partial Differential
  Equations of Second Order}, Classics in Mathematics, Springer-Verlag, Berlin,
  2001.

\bibitem{Grimnes2008}
{\sc S.~Grimnes and {\O}.~G. Martinsen}, {\em Bioimpedance and Bioelectricity
  Basics}, Academic Press, San Diego, 2nd~ed., 2008.

\bibitem{Haacke1991}
{\sc E.~M. Haacke, L.~S. Petropoulos, E.~W. Nilges, and D.~H. Wu}, {\em
  Extraction of conductivity and permittivity using magnetic resonance
  imaging}, Physics in Medicine and Biology, 36 (1991), pp.~723--734.


\bibitem{Holmander1966}
{\sc L. H\"ormander}, {\em Linear Partial Differential Operators},
Grundl. Math. Wissenschaft., 116, Springer, 1966.


%

\bibitem{Katscher2007}
{\sc U.~Katscher, T.~Dorniok, C.~Findeklee, P.~Vernickel, and K.~Nehrke}, {\em
  In vivo determination of electric conductivity and permittivity using a
  standard MR system}, in 13th International Conference on Electrical
  Bioimpedance and the 8th Conference on Electrical Impedance Tomography,
  H.~Scharfetter and R.~Merwa, eds., vol.~17, Berlin, 2007, Springer-Verlag,
  pp.~508--511.

\bibitem{Katscher2006a}
{\sc U.~Katscher, M.~Hanft, P.~Vernickel, and C.~Findeklee}, {\em Electric
  properties tomography (EPT) via MRI}, in Proceedings of the 14th Annual
  Meeting of ISMRM, Seattle, Washington, USA, vol.~14, 2006, p.~3037.

\bibitem{Katscher2006b}
{\sc U.~Katscher, M.~Hanft, P.~Vernickel, and C.~Findeklee}, {\em Experimental
  verification of electric properties tomography (EPT)}, in Proc. Intl. Soc.
  Mag. Reson. Med., vol.~14, 2006, p.~3035.

\bibitem{Katscher2009}
{\sc U.~Katscher, T.~Voigt, C.~Findeklee, P.~Vernickel, K.~Nehrke, and
  O.~Dossel}, {\em Determination of electric conductivity and local SAR via B1
  mapping}, IEEE Trans. Med. Imag., 28 (2009), pp.~1365--1374.

\bibitem{Kowalevsky1875}
{\sc S. von Kowalevsky}, {\em Zur theorie der partiellen differentialgleichung},
J. Reine Angewandte Math.,  80 (1875), pp.~1--32.


\bibitem{Lewy1957}
{\sc H. Lewy}, {\em An Example of a smooth linear partial differential equation without solution},
Annals of Math., 66 (1957), pp.~155--158.




\bibitem{Mizohata1962}
{\sc S. Mizohata}, {\em Solutions nulles et solutions non analytiques},
J. Math. Kyoto Univ. , 1(1962), pp.~271--302.

\bibitem{Moortele2005}
{\sc V.~de~Moortele, C.~Akgun, G.~Adriany, S.~Moeller, J.~Ritter, C.~M.
  Collins, M.~B. Smith, J.~T. Vaughan, K.~Ugurbil.}, {\em B1 destructive
  interferences and spatial phase patterns at 7 T with a head transceiver array
  coil}, Magn. Reson. Med., 54 (2005), pp.~1503--1518.


%

\bibitem{Seo2012b}
{\sc J.~K. Seo, D.~H. Kim, J.~Lee, O.~I. Kwon, S.~Z.~K. Sajib, and E.~J. Woo},
  {\em Electrical tissue property imaging using MRI at dc and Larmor
  frequency}, Inverse Problems, 28 (2012), 084002, 26pp.

\bibitem{Seo2012a}
{\sc J.~K. Seo, M.~O. Kim, J.~Lee, N.~Choi, E.~J. Woo, H.~J. Kim, O.~I. Kwon,
  and D.~H. Kim}, {\em Error analysis of nonconstant admittivity for MR-based
  electric property imaging}, IEEE Trans. Med. Imag., 31 (2012), pp.~430--437.

\bibitem{Song2013}
{\sc Y. Song and J. K. Seo}, {\em Conductivity and permittivity image reconstruction at the Larmor frequency using MRI},
SIAM J. Appl. Math., 73 (2013), pp. 2262--2280.

\bibitem{SeoWoo2011}
{\sc J.~K. Seo and E.~J. Woo}, {\em Magnetic resonance electrical impedance
  tomography (MREIT)}, SIAM Rev., 53 (2011), pp.~40--68.

\bibitem{Seo2003a}
{\sc J.~K. Seo, J.~R. Yoon, E.~J. Woo, and O.~Kwon}, {\em Reconstruction of
  conductivity and current density images using only one component of magnetic
  field measurements}, IEEE Trans. Biomed. Eng., 50 (2003), pp.~1121--1124.

\bibitem{seobook}
{\sc J.~K. Seo,  E.~J. Woo, U. Katscher,  and Y. Wang}, {\em Electro-Magnetic Tissue Properties MRI}, Imperial College Press, Singapoor, 2014.

\bibitem{Stollberger1996}
{\sc R.~Stollberger and P.~Wach}, {\em Imaging of the active B1 field {\it in
  vivo}}, Magn. Reson. Med., 35 (2005), pp.~246--251.

\bibitem{Vaughan2004}
{\sc J.~T. Vaughan, G.~Adriany, C.~J. Snyder, J.~Tian, T.~Thiel, L.~Bolinger,
  H.~Liu, L.~DelaBarre, and K.~Ugurbil}, {\em Efficient high-frequency body
  coil for high-field MRI}, Magn. Reson. Med., 52 (2004), pp.~851--859.




\bibitem{Voigt2011}
{\sc T.~Voigt, U.~Katscher, and O.~Doessel}, {\em Quantitative conductivity and
  permittivity imaging of the human brain using electric properties
  tomography}, Magn. Reson. Med., 66 (2011), pp.~456--466.

\bibitem{Wang2005}
{\sc J.~Wang, M.~Qiu, Q.~X. Yang, M.~B. Smith, and R.~T. Constable}, {\em
  Measurement and correction of transmitter and receiver induced
  nonuniformities {\it in vivo}}, Magn. Reson. Med., 53 (2005), pp.~408--417.

\bibitem{Wen2003}
{\sc H.~Wen}, {\em Noninvasive quantitative mapping of conductivity and
  dielectric distributions using RF wave propagation effects in high-field
  MRI}, in Proceedings of SPIE, M.~J. Yaffe and L.~E. Antonuk, eds., vol.~5030,
  San Diego, CA, 2003, pp.~471--477.

\bibitem{WooSeo2008}
{\sc E.~J. Woo and J.~K. Seo}, {\em Magnetic resonance electrical impedance
  tomography (MREIT) for high-resolution conductivity imaging}, Physiol. Meas.,
  29 (2008), pp.~R1--R26.






\end{thebibliography}
\end{document}